\DeclareMathOperator{\slim}{(\mbox{\sc sot})lim}
\numberwithin{equation}{section}
\newtheorem{theorem}{Theorem}[section]
\newtheorem{corollary}[theorem]{Corollary}
\newtheorem{prop}[theorem]{Proposition}
\newtheorem{prob}[theorem]{Problem}
\newtheorem{pyta}[theorem]{Question}
\newtheorem{lemma}[theorem]{Lemma}
\theoremstyle{remark}
\newtheorem{rem}[theorem]{Remark}
\newcommand*{\hh}{\mathcal{H}}
\newcommand*{\ascr}{\mathscr{A}}
\newcommand*{\bscr}{\mathscr{B}}
\newcommand*{\natu}{\mathbb{N}}
\newcommand*{\mfr}{\mathfrak{M}}
\newcommand*{\nul}{\mathcal{N}}
\newcommand*{\comp}{\mathbb{C}}
\newcommand*{\borel}{\mathfrak{B}}
\newcommand*{\cbb}{\comp}
\newcommand*{\D}{\mathrm{d\hspace{.1ex}}}
\theoremstyle{definition}
\newtheorem{ex}[theorem]{Example}
\newcommand*{\Le}{\leqslant}
\newcommand*{\Ge}{\geqslant}
\newcommand*{\ran}{\mathcal{R}}
\newcommand*{\ogr}{\boldsymbol{B}}
\newcommand*{\kk}{\mathcal{K}}
\newcommand*{\rbb}{\mathbb{R}}
\newcommand*{\zbb}{\mathbb{Z}}
\begin{document}

   \title[{Two-moment characterization of spectral
measures}] {Two-moment characterization of
spectral measures \\ on the real line}

   \author[P. Pietrzycki and J. Stochel]{Pawe{\l} Pietrzycki and  Jan Stochel}

   \subjclass[2020]{Primary 47B15, 44A60;
Secondary 47A63, 46G10, 46L05}

   \keywords{Semispectral measure, spectral
measure, operator moment, positive linear map,
multiplicativity, operator monotone function}

   \address{Wydzia{\l} Matematyki i Informatyki, Uniwersytet
Jagiello\'{n}ski, ul. {\L}ojasiewicza 6, PL-30348
Krak\'{o}w}

   \email{pawel.pietrzycki@im.uj.edu.pl}

   \address{Wydzia{\l} Matematyki i Informatyki, Uniwersytet
Jagiello\'{n}ski, ul. {\L}ojasiewicza 6, PL-30348
Krak\'{o}w}

   \email{jan.stochel@im.uj.edu.pl}

   \begin{abstract}
In \cite{KLY06JMP}, Kiukas, Lahti and Ylinen
asked the following general question. {\em When
is a positive operator measure projection
valued?} A version of this question formulated
in terms of operator moments was posed in
\cite{P-S}. {\em Let $T$ be a selfadjoint
operator and $F$ be a Borel semispectral measure
on the real line with compact support. For which
positive integers $p< q$ do the equalities $T^k
=\int_{\rbb} x^k F(\D x)$, $k=p, q$, imply that
$F$ is a spectral measure?} In the present
paper, we completely solve the second problem.
The answer is affirmative if $p$ is odd and $q$
is even, and negative otherwise. The case
$(p,q)=(1,2)$ closely related to intrinsic noise
operator was solved by several authors including
Kruszy\'{n}ski and de Muynck as well as Kiukas,
Lahti and Ylinen. The counterpart of the second
problem concerning the multiplicativity of
unital positive linear maps on $C^*$-algebras is
also solved.
   \end{abstract}

   \maketitle

   \section{Introduction}
One of the most important concepts in
mathematics and physics is the notion of a
normalized positive operator valued measure also
known as a probability operator valued measure
or a generalized observable, or else
semispectral measure. This concept was
introduced in the 1940s by Naimark (see
\cite{Nai40,Nai40c2,Nai43}). Positive operator
valued measures play a significant role in
operator theory \cite{Ber66,Ak-Gl93,Ju-St08,P-S}
and are a standard tool in quantum information
theory and quantum optics
\cite{Brand99,Ve07,Ho11,B-L-P-Y16}. Recall that
a map $F\colon \ascr \to \ogr(\hh)$ defined on a
$\sigma$-algebra $\ascr$ of subsets of a set $X$
is said to be:
   \begin{itemize}
   \item a \textit{positive operator valued measure}
({\em POV measure}) if $\langle F (\cdot)h,
h\rangle$ is a positive measure for every $h \in
\hh$,
   \item a \textit{semispectral measure} if $F$ is a POV
measure such that $F(X) = I$,
   \item a \textit{spectral measure} if $F$ is a
semispectral measure such that $F(\varDelta)$ is
an orthogonal projection for every $\varDelta
\in \ascr$,
   \end{itemize}
where $\ogr(\hh)$ is the collection of all
bounded linear operators on a Hilbert space
$\hh$ and $I$ is the identity operator on $\hh$.
The celebrated Naimark's dilation theorem (see
\cite{Nai43} and \cite[Theorem~6.4]{Ml78})
states that a POV measure $F\colon \ascr \to
\ogr(\hh)$ can always be represented as the
$R$-compression $R^*E(\cdot)R$ of a spectral
measure $E\colon \ascr \to \ogr(\kk)$, where
$\kk$ is a Hilbert space and $R$ is a bounded
linear operator from $\hh$ to $\kk$. By
\cite[p.\ 14]{Ml78}, $\kk$ can be made minimal
in the sense that $\kk = \bigvee
\{E(\varDelta)R(\hh) \colon \varDelta \in
\ascr\}$. If $F$ is semispectral, then $\hh$ is
a subspace of $\kk$ and $R$ is the (isometric)
embedding of $\hh$ into $\kk$, and so the
minimality condition takes the form $\kk =
\bigvee\{E(\varDelta)\hh \colon \varDelta \in
\ascr\}$.

It turns out that, from a mathematical and
physical point of view, it is important to
investigate the relationship between
semispectral and spectral measures. In the
classical von Neumann description of quantum
mechanics selfadjoint operators or,
equivalently, Borel spectral measures on the
real line represent observables. This approach
is insufficient in describing many natural
properties of measurements, such as measurement
inaccuracy. Therefore, in standard modern
quantum theory, the generalization to
semispectral measures is widely used. In
particular, this is the case in quantum
information theory and in quantum optics (to
represent measurement statistics). Among the
papers undertaking this line of research, the
following are noteworthy
\cite{Nai40c2,hol72,K-dM87,Cat00,Ben03,Ben06,KLY06JMP,Jen07,Ben10,Ben16}.

By a {\em Borel POV measure on $\rbb$} we mean a
POV measure $F\colon \borel(\rbb) \to
\ogr(\hh)$, where $\borel(\rbb)$ stands for the
$\sigma$-algebra of all Borel subsets of the
real line $\rbb$ (below, the algebra $\ogr(\hh)$
will not be explicitly mentioned unless
necessary). For an integer $n\Ge 1$ and a Borel
POV measure $F$ on $\rbb$ with compact
support\footnote{For more information on closed
supports of spectral and semispectral measures,
see \cite[p.\ 69]{Sch12} and \cite[p.\
1799]{Ju-St08}.}, the integral
   \begin{equation*}
\int_{\rbb} x^{n} F(\D x)
   \end{equation*}
is a (bounded) self-adjoint operator, which is
called the {\em $n$th operator moment} of $F$. A
straightforward application of the Weierstrass
approximation theorem shows that a Borel POV
measure on $\rbb$ with compact support is
uniquely determined by its operator moments. One
of the features of a Borel spectral measures on
$\rbb$ is the multiplicativity of the
corresponding Stone-von Neumann functional
calculus. In particular, if $E$ is a Borel
spectral measure on $\rbb$ with compact support,
then the following identities hold\footnote{The
identity \eqref{polsp} holds even if the closed
support of $E$ is not compact. Since we only
deal with bounded operators in this paper, the
POV measures considered have compact supports
(see Section~\ref{Sec.7} for more explanation).}
  \begin{align} \label{polsp}
\Big(\int_{\rbb} x E(\D x)\Big)^{n} =
\int_{\rbb} x^{n} E(\D x), \quad n= 1, 2,
\ldots.
   \end{align}
Hence, all operator moments of $E$ are
determined by the first one, and according to
the spectral theorem there is a one-to-one
correspondence between Borel spectral measures
on $\rbb$ and their first operator moments. This
is no longer true for general Borel semispectral
measures on $\rbb$. It turns out, however, that
the single equality in \eqref{polsp} with $n=2$
guarantees spectrality.
   \begin{theorem}[{\cite[Proposition~1]{K-dM87},
\cite[Theorem~5]{KLY06JMP} and
\cite[Remark~5.3]{P-S}}] \label{kukuthe} A~Borel
semispectral measure $F$ on $\rbb$ with compact
support\/\footnote{The first two references
contain versions of this result for semispectral
measures with non-compact supports.} is spectral
if and only~if
   \begin{align*}  \Big(\int_{\rbb} x
F(\D x)\Big)^{2} = \int_{\rbb} x^{2} F(\D x).
   \end{align*}
   \end{theorem}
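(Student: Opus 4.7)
The ``only if'' direction follows immediately from \eqref{polsp} with $n=2$, so the task is the converse. My plan is to apply Naimark's dilation theorem to $F$ and translate the moment identity into a geometric statement about the dilation. Write $F(\cdot) = P_\hh E(\cdot)|_\hh$, where $E\colon \borel(\rbb) \to \ogr(\kk)$ is a minimal Borel spectral dilation of $F$ on a Hilbert space $\kk \supseteq \hh$, and $P_\hh$ denotes the orthogonal projection of $\kk$ onto $\hh$. Since $F$ has compact support, so does $E$, so $N := \int_\rbb x\, E(\D x)$ is a bounded selfadjoint operator on $\kk$. Setting $A := \int_\rbb x\, F(\D x)$, one has $A = P_\hh N|_\hh$ and, by hypothesis, $A^2 = \int_\rbb x^2 F(\D x) = P_\hh N^2|_\hh$.

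The first key step is to show that $\hh$ is invariant under $N$. For any $h \in \hh$, I would compute
\begin{align*}
\|Nh\|^2 = \langle N^2 h, h\rangle = \langle P_\hh N^2 h, h\rangle = \langle A^2 h, h\rangle = \|Ah\|^2 = \|P_\hh Nh\|^2.
\end{align*}
Equality of $\|Nh\|$ and $\|P_\hh Nh\|$ forces $Nh = P_\hh Nh \in \hh$. Hence $\hh$ is $N$-invariant, and because $N$ is selfadjoint, $\hh$ in fact reduces $N$.

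Because $E$ is the spectral measure of $N$, each $E(\varDelta)$ lies in the von~Neumann algebra generated by $N$; consequently the projection $P_\hh$, which commutes with $N$, commutes with every $E(\varDelta)$, so $\hh$ reduces $E$. By the minimality of the dilation, $\kk = \bigvee\{E(\varDelta)\hh \colon \varDelta \in \borel(\rbb)\} \subseteq \hh$, whence $\kk = \hh$ and $F = E$ is spectral.

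The whole argument is essentially a Pythagoras computation in the dilation space combined with Naimark minimality. The only nontrivial external fact is the standard statement that the spectral measure of a bounded selfadjoint operator lies in its generated von~Neumann algebra, which provides the passage from ``$\hh$ reduces $N$'' to ``$\hh$ reduces $E(\varDelta)$ for every $\varDelta$''. I do not anticipate any serious obstacle; the structural choice of working in a \emph{minimal} Naimark dilation is what makes the proof essentially mechanical.
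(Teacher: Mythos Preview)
Your argument is correct and is essentially the same approach the paper uses (via Lemma~\ref{kadlemma} and Lemma~\ref{fuglemma}, which is exactly the $(p,q)=(1,2)$ specialization of the proof of Theorem~\ref{main1}): your Pythagoras identity $\|Nh\|^2-\|P_\hh Nh\|^2=\|(I-P_\hh)Nh\|^2$ is precisely the computation \eqref{kadP2}, and the passage from $P_\hh N=NP_\hh$ to $P_\hh E=EP_\hh$ is the paper's Lemma~\ref{fuglemma}(i). The only cosmetic difference is that you finish by invoking minimality to get $\kk=\hh$, whereas the paper simply quotes Lemma~\ref{fuglemma}(i) directly; both are fine.
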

It is worth mentioning that if $F$ is a Borel
semispectral measure on $\rbb$ with compact
support, then the operator $\mathrm{Var}(F)$,
called {\em intrinsic noise operator} (see
\cite[p.\ ~177]{B-L-P-Y16}), defined by
  \begin{align} \label{noise}
\mathrm{Var}(F)=\int_{\rbb} x^{2} F(\D
x)-\Big(\int_{\rbb} x F(\D x)\Big)^{2}
  \end{align}
is always positive (see Corollary~\ref{istris};
this can also be deduced from the Kadison
inequality \eqref{kadison}). Thus, according to
Theorem~\ref{kukuthe}, equality holds in
$\mathrm{Var}(F) \Ge 0$ only for spectral
measures.

In this connection, it is worth emphasizing that
Theorem~\ref{kukuthe} was developed for the
needs of quantum physics. Namely, the main
purpose of the quantization proposed in
\cite{KLY06JMAA,KLY06JMP} was to construct
observables that are not spectral measures, and
this was done by using the operator moments of
these observables. To achieve this goal, it was
important to be able to use these moments to
determine whether a given observable is or is
not a spectral measure. This led Kiukas, Lahti
and Ylinen to the following question (see
\cite[Sec.\ ~VI]{KLY06JMP}, see also \cite[Sec.\
~5]{Ki07}):
   \begin{pyta} \label{putan}
When is a positive operator measure projection
valued\/{\em ?}
   \end{pyta}

In a recent paper \cite{P-S}, we gave a solution
to {\cite[Problem~1.1]{Curto20}} concerning
subnormal square roots of quasinormal operators.
In fact, the paper \cite{P-S} provides two
solutions to this problem that use two different
approaches. The first one appeals to the theory
of operator monotone functions, in particular
Hansen's inequality. The second is based on the
technique that utilizes operator moments of
semispectral measures. A detailed analysis of
both solutions led us to a new criterion for the
spectrality of a Borel semispectral measure on
$\rbb$ compactly supported in $[0,\infty)$,
written in terms of its two operator moments.
This criterion was used to solve a
generalization of {\cite[Problem~1.1]{Curto20}}
(see \cite[Theorem 4.1]{P-S}).
   \begin{theorem}[{\cite[Theorem 4.2]{P-S}, \cite{P-S-cor}}]
\label{maintw4} Let $T\in \ogr(\hh)$ be a
positive operator and $\alpha,\beta$ be two
distinct positive real numbers. Assume that
$F\colon \borel(\rbb) \to \ogr(\hh)$ is a
semispectral measure compactly supported in
$[0,\infty)$. Then the following conditions
are~equivalent{\em :}
   \begin{enumerate}
   \item[(i)] $F$ is the spectral
measure of $T$,
   \item[(ii)] $T^n = \int_{[0,\infty)} x^n F(\D
x)$ for all integers $n\Ge 0$,
   \item[(iii)] $T^r = \int_{[0,\infty)} x^r F(\D
x)$ for all $r \in [0,\infty)$,
   \item[(iv)] $T^r =\int_{[0,\infty)}
x^r F(\D x)$ for $r= \alpha, \beta$.
   \end{enumerate}
   \end{theorem}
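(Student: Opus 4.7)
The plan is to establish the chain $(i) \Rightarrow (iii) \Rightarrow (ii) \Rightarrow (i)$ by standard functional calculus and moment--determinacy arguments, and then devote the bulk of the work to the decisive implication $(iv) \Rightarrow (i)$. The implication $(i) \Rightarrow (iii)$ is immediate from the multiplicativity of the Stone--von Neumann functional calculus of $T$ applied to $x \mapsto x^{r}$; the implications $(iii) \Rightarrow (ii)$ and $(iii) \Rightarrow (iv)$ follow by restricting $r$ to nonnegative integers and to $\{\alpha,\beta\}$, respectively. For $(ii) \Rightarrow (i)$ I would invoke the fact, recalled in the introduction, that a compactly supported Borel semispectral measure on $\rbb$ is uniquely determined by its operator moments (Weierstrass approximation), and compare $F$ with the spectral measure $E_{T}$ of $T$, which satisfies the same moment identities by $(i) \Rightarrow (ii)$.

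The heart of the argument is $(iv) \Rightarrow (i)$, which I would treat via the minimal Naimark dilation of $F$. Write $F(\cdot) = V^{\ast} E(\cdot) V$ with $V \colon \hh \to \kk$ the isometric embedding of $\hh$ into an ambient Hilbert space $\kk$ and $E \colon \borel(\rbb) \to \ogr(\kk)$ a spectral measure whose compact support coincides with that of $F$, hence lies in $[0,\infty)$. Setting $N := \int_{[0,\infty)} x\, E(\D x) \Ge 0$, one has $V^{\ast} N^{r} V = \int_{[0,\infty)} x^{r} F(\D x)$ for every $r \Ge 0$. Assuming without loss of generality $0 < \alpha < \beta$, hypothesis $(iv)$ now reads
\begin{align*}
V^{\ast} N^{\alpha} V = T^{\alpha}, \qquad V^{\ast} N^{\beta} V = T^{\beta}.
\end{align*}
Next, I would apply the Hansen inequality to the operator monotone, non-affine function $f(x) = x^{\alpha/\beta}$ on $[0,\infty)$ (with exponent $\alpha/\beta \in (0,1)$) and the positive operator $N^{\beta}$, obtaining
\begin{align*}
V^{\ast} N^{\alpha} V \;=\; V^{\ast} (N^{\beta})^{\alpha/\beta} V \;\Le\; (V^{\ast} N^{\beta} V)^{\alpha/\beta} \;=\; (T^{\beta})^{\alpha/\beta} \;=\; T^{\alpha}.
\end{align*}
Since the outer terms coincide, equality holds throughout. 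The equality case of Hansen's inequality for non-affine operator monotone functions then forces $V V^{\ast}$ to commute with $N^{\beta}$, equivalently (by the continuous functional calculus of the positive operator $N$) with $N$ itself; that is, $\hh$ reduces $N$. Consequently every spectral projection $E(\varDelta)$ commutes with $V V^{\ast}$, so $F(\varDelta) = V^{\ast} E(\varDelta) V$ is an orthogonal projection and $F$ is a spectral measure. Writing $S := V^{\ast} N V = N|_{\hh}$, the reducing property gives $S^{\beta} = V^{\ast} N^{\beta} V = T^{\beta}$, whence $S = T$ by uniqueness of the positive $\beta$-th root; therefore $F = E_{S} = E_{T}$, which is $(i)$.

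The principal obstacle is the equality case of Hansen's inequality: one needs that $V^{\ast} f(A) V = f(V^{\ast} A V)$ for an isometry $V$ and a non-affine operator monotone $f$ forces $V V^{\ast}$ to commute with $A$. The non-affinity of $x^{\alpha/\beta}$ (which requires precisely $\alpha \ne \beta$) is what activates this rigidity statement; without it the whole argument collapses. Depending on the literature available, this rigidity should either be carefully cited or extracted as a preliminary lemma in the paper.
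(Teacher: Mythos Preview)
The paper does not actually prove Theorem~\ref{maintw4}; it is quoted from \cite[Theorem~4.2]{P-S} and \cite{P-S-cor}, so there is no in-paper proof to compare against directly. That said, your outline is correct and matches both the sketch the paper gives immediately after the statement (reducing $(iv)\Rightarrow(i)$ to the single-equation criterion $\big(\int x\,F(\D x)\big)^{s}=\int x^{s}\,F(\D x)$ via \cite[Lemma~4.3]{P-S}) and the machinery assembled in Section~\ref{Sec.2} and deployed in the proof of Theorem~\ref{main1}: minimal Naimark dilation (Lemma~\ref{fuglemma}), Hansen's inequality with Uchiyama's equality case (Theorem~\ref{hans}), and the commutation criterion for spectrality (Lemma~\ref{fuglemma}(i)). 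In effect your argument is exactly Case~1 of the proof of Theorem~\ref{main1}, simplified by the positivity of $T$ and $N$, which lets you work with the single operator monotone function $x\mapsto x^{\alpha/\beta}$ and avoid the Kadison step and the Lieb--Ruskai machinery of Case~2.

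Two minor points worth making explicit. First, the ``moreover'' clause of Theorem~\ref{hans} is formulated for an orthogonal projection $P\neq I$, not for a general isometry; you should pass from $V$ to $P:=VV^{*}$ on $\kk$ (using $f(0)=0$ to match the two formulations on $\kk\ominus\hh$) and dispose of the trivial case $P=I_{\kk}$ separately, where $F=E$ is already spectral. Second, after concluding that $P$ commutes with $N^{\beta}$, your step ``equivalently with $N$'' is justified by $N=(N^{\beta})^{1/\beta}$ via continuous functional calculus of the \emph{positive} operator $N^{\beta}$; this is the natural substitute for the surjectivity-of-$\varphi_{p}$ argument used in the integer-exponent setting of Theorem~\ref{main1}.
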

As shown in the proof of
\cite[Theorem~4.2]{P-S}, the implication
(iv)$\Rightarrow$(i) is equivalent to the fact
that a semispectral measure $F\colon
\borel(\rbb) \to \ogr(\hh)$ compactly supported
in $[0,\infty)$ for which there exists $s\in
(0,\infty) \backslash \{1\}$ such that
   \begin{align*}
\Big(\int_{[0,\infty)} x F(\D x)\Big)^{s} =
\int_{[0,\infty)} x^{s} F(\D x)
   \end{align*}
is spectral (see \cite[Lemma~4.3]{P-S}).

In view of Question~\ref{putan} and
Theorems~\ref{kukuthe} and \ref{maintw4}, it
seems natural to pose the following general
problem in which $\varXi$ is a fixed nonempty
set of positive integers (if $\varXi$ is finite,
then we always order its elements in a
non-decreasing manner). Problem~\ref{momentprob}
below can be regarded as a generalization of
\cite[Problem~5.2]{P-S} which deals with
two-element sets $\varXi$.
   \begin{prob} \label{momentprob}
Let $T\in \ogr(\hh)$ be a selfadjoint operator
and $F\colon \borel(\rbb) \to \ogr(\hh)$ be a
semispectral measure with compact support. Does
the system of equations
   \begin{equation}\label{momprob}
T^k =\int_{\rbb} x^k F(\D x),\quad k \in \varXi,
   \end{equation}
imply that $F$ is spectral\/{\em ?}
   \end{prob}
This problem can be rephrased equivalently in
terms of dilation theory as follows (use
Naimark's dilation theorem and
Lemma~\ref{fuglemma}):

   \begin{prob}\label{dilationprob}
Let $T\in \ogr(\hh)$ be a selfadjoint operator,
$F\colon \borel(\rbb) \to \ogr(\hh)$ be a
semispectral measure with compact support,
$E\colon \borel(\rbb) \to \ogr(\kk)$ be a
minimal spectral dilation of $F$ $($i.e., $E$ is
a spectral measure satisfying \eqref{fpeh} and
\eqref{minimity}$)$ and $S$ be the first
operator moment of $E$ $($i.e., $S=\int_{\rbb} x
E(\D x)$$)$. Does the system of~equations
   \begin{equation*}
T^k = P S^k|_{\hh}, \quad k\in \varXi,
   \end{equation*}
imply that $P$ and $S$ commutes\/{\em ?}
   \end{prob}
It turns out that Problem~\ref{momentprob} is
closely related to the question of
multiplicativity of unital positive linear maps
on $C^*$-algebras (see Remark~\ref{takisubi}).
In fact, the two problems are logically
equivalent regardless of the cardinality of the
set $\varXi$ (see Remark~\ref{rown}). The
$C^*$-algebra counterpart of
Problem~\ref{momentprob} takes the following
form.
   \begin{prob} \label{algebraprob}
Let $\mathcal{A}$ and $\mathcal{B}$ be unital
$C^*$-algebras, $\varPhi \colon \mathcal{A}
\rightarrow \mathcal{B}$ be a unital positive
linear map and $a\in \mathcal{A}$ and $b\in
\mathcal{B}$ be selfadjoint. Does the system of
equations
   \begin{equation*}
b^k = \varPhi(a^k), \quad k\in \varXi,
   \end{equation*}
imply that $\varPhi$ restricted to the unital
subalgebra generated by $\{a\}$ is
multiplicative\/{\em ?}
   \end{prob}
The correspondence between
Problems~\ref{momentprob} and \ref{dilationprob}
allows us to use the theory of operator monotone
functions and related operator inequalities to
prove the main results of this paper, which
provide complete solutions to
Problems~\ref{momentprob}, \ref{dilationprob}
and \ref{algebraprob} for two-element sets
$\varXi$. We begin with the affirmative
solutions.
   \begin{theorem} \label{main1}
Let $T \in \ogr(\hh)$ be a selfadjoint operator,
$F\colon \borel(\rbb) \to \ogr(\hh)$ be a
semispectral measure with compact support and
$p,q$ be positive integers such that $p<q$, $p$
is odd and $q$ is even. Then the following
conditions are equivalent{\em :}
   \begin{enumerate}
   \item[(i)] $F$ is the spectral measure of $T$,
   \item[(ii)] $T^k =\int_{\rbb} x^k F(\D x)$
for all integers $k\Ge 0$,
   \item[(iii)]  $T^k =\int_{\rbb} x^k F(\D x)$ for
$k=p, q$.
   \end{enumerate}
   \end{theorem}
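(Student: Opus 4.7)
The implications (i)$\Rightarrow$(ii)$\Rightarrow$(iii) are routine: (i)$\Rightarrow$(ii) is the multiplicativity \eqref{polsp} of the Stone--von Neumann functional calculus, and (ii)$\Rightarrow$(iii) follows by restricting to $k=p,q$. The content is (iii)$\Rightarrow$(i).

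I would attack (iii)$\Rightarrow$(i) via the dilation reformulation (Problem~\ref{dilationprob}). Let $E\colon\borel(\rbb)\to\ogr(\kk)$ be a minimal spectral dilation of $F$, write $S=\int_{\rbb} x\,E(\D x)\in\ogr(\kk)$ for its selfadjoint first moment, and let $P\in\ogr(\kk)$ be the orthogonal projection onto $\hh$. Setting $\varPhi(A)=PAP|_\hh$, the hypothesis (iii) reads $\varPhi(S^p)=T^p$ and $\varPhi(S^q)=T^q$. By minimality, $F$ is spectral if and only if $\hh$ reduces $S$; since $p$ is odd, $x\mapsto x^p$ is a Borel bijection of $\rbb$, so $S$ lies in the abelian von Neumann algebra generated by $S^p$, and $\hh$ reduces $S$ iff it reduces $S^p$, iff $(I-P)S^{p}P=0$. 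A direct computation identifies this with equality in
\begin{equation*}
\varPhi(S^{2p})-T^{2p}=\bigl((I-P)S^{p}P\bigr)^{\!*}(I-P)S^{p}P\big|_\hh\ge 0,
\end{equation*}
the inequality itself being Kadison's inequality applied to the selfadjoint operator $S^p$. Moreover, once $F$ is known to be spectral, its first moment $\varPhi(S)$ must satisfy $\varPhi(S)^p=\varPhi(S^p)=T^p$, and the injectivity of the real $p$-th root on bounded selfadjoint operators (available because $p$ is odd) yields $\varPhi(S)=T$, so $F=E_T$.

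The whole proof therefore reduces to upgrading $\varPhi(S^{2p})\ge T^{2p}$ to an equality, and here the parity of $q$ enters crucially: since $q$ is even, $S^q\ge 0$ and $T^q\ge 0$, which permits Hansen's inequality with power functions on $[0,\infty)$. If $q\ge 2p$, the exponent $2p/q$ lies in $(0,1]$, so $t\mapsto t^{2p/q}$ is operator monotone and Hansen's inequality yields
\begin{equation*}
\varPhi(S^{2p})=\varPhi\bigl((S^q)^{2p/q}\bigr)\le\bigl(\varPhi(S^q)\bigr)^{2p/q}=(T^q)^{2p/q}=T^{2p},
\end{equation*}
where the identifications $(S^q)^{2p/q}=S^{2p}$ and $(T^q)^{2p/q}=T^{2p}$ use that $2p$ is even. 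In the remaining range $p<q<2p$, the exponent $2p/q$ lies in $(1,2)$ and $t^{2p/q}$ is only operator convex, so the bound cannot be extracted from $\varPhi(S^q)=T^q$ alone. Here I would combine both moment hypotheses via the Jordan decompositions $S=S_+-S_-$, $T=T_+-T_-$, and the identities $S^p=S_+^p-S_-^p$ ($p$ odd), $S^q=S_+^q+S_-^q$ ($q$ even). Applying Hansen's inequality to $S^q$ with the operator monotone function $t\mapsto t^{p/q}$ on $[0,\infty)$ gives $\varPhi(|S|^p)\le|T|^p$; adding and subtracting $\varPhi(S^p)=T^p$ produces the componentwise bounds $\varPhi(S_\pm^p)\le T_\pm^p$. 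A further application of operator monotonicity of $t\mapsto t^{1/p}$ on $[0,\infty)$ together with the equality $\varPhi(S^q)=\varPhi(S_+^q)+\varPhi(S_-^q)=T_+^q+T_-^q$ and Kadison-type rigidity on $S_\pm\ge 0$ is then used to force equality throughout, yielding $\varPhi(S^{2p})=T^{2p}$.

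The principal obstacle is this subcase $p<q<2p$ (the smallest instance being $(p,q)=(3,4)$): no single power function on $[0,\infty)$ is both operator monotone and has the exponent matching $2p/q$ needed to bound $\varPhi(S^{2p})$ from above by $T^{2p}$ directly from $\varPhi(S^q)=T^q$. The parity interplay ($p$ odd, $q$ even) must be exploited componentwise on the Jordan parts of $S$ and $T$, and the rigidity of the operator inequalities used on each part must be delicate enough that the sum equality $\varPhi(S_+^q)+\varPhi(S_-^q)=T_+^q+T_-^q$ forces termwise equality.
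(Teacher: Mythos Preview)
Your setup and the case $q\Ge 2p$ are correct and coincide with the paper's Case~1. The identification $(S^q)^{2p/q}=|S|^{2p}=S^{2p}$ (and likewise for $T$) is valid precisely because $q$ and $2p$ are even, and Hansen's inequality with $f(t)=t^{2p/q}$ sandwiches $PS^{2p}P$ between $(PS^pP)^2=T^{2p}$ and $(PS^qP)^{2p/q}=T^{2p}$, forcing $PS^p=S^pP$ via Lemma~\ref{kadlemma}(ii). The reduction from ``$P$ commutes with $S^p$'' to ``$F$ is the spectral measure of $T$'' via the bijectivity of $\varphi_p$ is also fine.

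The gap is the subcase $p<q<2p$. Your Jordan-decomposition line does produce $\varPhi(S_\pm^p)\Le T_\pm^p$ from $\varPhi(|S|^p)\Le |T|^p$ and $\varPhi(S^p)=T^p$; that much is correct. But the step you label ``Kadison-type rigidity'' is not an argument. From $\varPhi(S_\pm^p)\Le T_\pm^p$ one cannot deduce $\varPhi(S_\pm^q)\Le T_\pm^q$: the exponent $q/p$ exceeds $1$, so $t\mapsto t^{q/p}$ is not operator monotone, and operator convexity of this power yields the wrong inequality $(\varPhi(S_\pm^p))^{q/p}\Le \varPhi(S_\pm^q)$. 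Without a componentwise bound on the $q$th moments, the sum identity $\varPhi(S_+^q)+\varPhi(S_-^q)=T_+^q+T_-^q$ does not force termwise equality. There is also no reason for $\varPhi(S_+^k)$ and $\varPhi(S_-^k)$ to commute or have orthogonal ranges unless $P$ already commutes with the spectral projections of $S$, which is precisely the conclusion sought. As written, the argument in this range is circular or incomplete.

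The paper closes this case by a different mechanism. Writing $q=2q'$ and $r=p-q'\in\natu$, Hansen's inequality for $f(t)=t^{r/q'}$ gives the upper bound $PS^{2r}P\Le (PS^{q}P)^{r/q'}=T^{2r}$. The matching lower bound $PS^{2r}|_\hh\Ge T^{2r}$ comes from the Lieb--Ruskai operator Schwarz inequality (Theorem~\ref{L-R}) applied with $A=S^r$, $B=S^{q'}$, together with the spectral calculus for $T$ to evaluate the limit $\slim_{\varepsilon\downarrow0}T^p(T^q+\varepsilon I)^{-1}T^p=T^{2p-q}=T^{2r}$. Equality in Hansen then forces $PS^q=S^qP$, whence $T^{qn}=PS^{qn}|_\hh$ for all $n\in\natu$; choosing $n_0$ with $p\Le \tfrac{1}{2}qn_0$ reduces to the already-settled case. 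This Lieb--Ruskai step and the bootstrapping to a larger even exponent are the missing ideas in your proposal.
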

The affirmative solution to
Problem~\ref{algebraprob} takes the form.
   \begin{theorem} \label{poturs}
Let $\mathcal{A}$ and $\mathcal{B}$ be unital
$C^*$-algebras, $\varPhi \colon \mathcal{A}
\rightarrow \mathcal{B}$ be a unital positive
linear map, $a\in \mathcal{A}$ be selfadjoint
and $p,q$ be positive integers such that $p<q$,
$p$ is odd and $q$ is even. Then the following
conditions are equivalent{\em :}
   \begin{enumerate}
   \item[(i)] $\varPhi$ restricted to
the unital subalgebra generated by $\{a\}$ is
multiplicative,
   \item[(ii)] there exists a selfadjoint
element $b\in \mathcal{B}$ such that $b^k =
\varPhi(a^k)$ for $k=p,q$.
   \end{enumerate}
Moreover, if {\em (ii)} holds, then
$b=\varPhi(a)$.
   \end{theorem}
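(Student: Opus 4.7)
The plan is to reduce Theorem~\ref{poturs} to Theorem~\ref{main1} by representing the unital $C^*$-algebra $\mathcal{B}$ faithfully on a Hilbert space and realizing the restriction of $\varPhi$ to the commutative subalgebra $C^*(1,a)$ as (integration against) a semispectral measure on the real line. The implication (i)$\Rightarrow$(ii) is immediate: multiplicativity on the unital subalgebra generated by $\{a\}$ forces $\varPhi(a^k)=\varPhi(a)^k$ for every $k$, so $b:=\varPhi(a)$ does the job and is selfadjoint since $a$ is. All the work therefore lies in the converse.

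For (ii)$\Rightarrow$(i), I would fix a faithful unital $*$-representation $\pi \colon \mathcal{B}\to \ogr(\hh)$, so $\pi\circ\varPhi$ is a unital positive linear map from $\mathcal{A}$ into $\ogr(\hh)$. The compact subalgebra $C^*(1,a)\subset\mathcal{A}$ is commutative, isomorphic to $C(\sigma(a))$, so $(\pi\circ\varPhi)|_{C^*(1,a)}$ becomes a unital positive linear map $C(\sigma(a))\to\ogr(\hh)$. Positive linear maps on $C(K)$ with values in $\ogr(\hh)$ are automatically completely positive, and the operator-valued Riesz representation theorem produces a unique Borel semispectral measure $F\colon\borel(\rbb)\to\ogr(\hh)$, supported in $\sigma(a)$ (hence with compact support), such~that
\begin{equation*}
\pi(\varPhi(f(a))) = \int_{\rbb} f(x)\,F(\D x),\qquad f\in C(\sigma(a)).
\end{equation*}
Applied to $f(x)=x^k$, this yields $\pi(\varPhi(a^k))=\int_{\rbb} x^k\, F(\D x)$ for every integer $k\Ge 0$.

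Setting $T:=\pi(b)$, which is a selfadjoint element of $\ogr(\hh)$, the hypothesis (ii) together with the displayed identity gives $T^k = \pi(b^k) = \pi(\varPhi(a^k)) = \int_{\rbb} x^k F(\D x)$ for $k=p,q$. Since $p$ is odd and $q$ is even with $p<q$, Theorem~\ref{main1} applies and identifies $F$ with the spectral measure of $T$. In particular, $T^k = \int_{\rbb} x^k F(\D x)$ for every integer $k\Ge 0$, so $\pi(b^k)=\pi(\varPhi(a^k))$ for all $k$; the faithfulness of $\pi$ then yields $b^k=\varPhi(a^k)$ for every $k\Ge 0$. Specializing to $k=1$ gives $b=\varPhi(a)$, and for arbitrary nonnegative integers $m,n$,
\begin{equation*}
\varPhi(a^m)\,\varPhi(a^n) = b^m b^n = b^{m+n} = \varPhi(a^{m+n}),
\end{equation*}
which is exactly multiplicativity of $\varPhi$ on the (norm-dense) $*$-subalgebra of polynomials in $a$, and hence on the whole unital subalgebra generated by $\{a\}$ by continuity.

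The main obstacle I anticipate is the bookkeeping in Step two, namely justifying cleanly that $(\pi\circ\varPhi)|_{C^*(1,a)}$ gives rise to a genuine $\ogr(\hh)$-valued semispectral measure whose operator moments coincide with $\pi(\varPhi(a^k))$; once this operator-valued Riesz representation is in hand, the rest is an essentially formal transfer from Theorem~\ref{main1}. A small additional point to verify is that faithfulness of $\pi$ is enough to pull equalities back to $\mathcal{B}$ — here it is, because $\pi$ is isometric and hence injective on every $\pi$-image comparison we make.
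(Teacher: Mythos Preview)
Your proposal is correct and follows essentially the same route as the paper: reduce to $\mathcal{B}\subset\ogr(\hh)$ via a faithful representation (the paper invokes Gelfand--Naimark to assume $\mathcal{B}=\ogr(\hh)$), produce a compactly supported semispectral measure $F$ with $\varPhi(a^n)=\int_{\rbb}x^nF(\D x)$, and then apply Theorem~\ref{main1}. The ``bookkeeping'' step you flag is precisely what the paper isolates as Lemma~\ref{wkuwer}; your direct use of the Gelfand isomorphism $C^*(1,a)\cong C(\sigma(a))$ together with the operator-valued Riesz representation is a clean variant of the paper's second proof of that lemma.
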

In the complementary result, we show that the
set (with $\natu=\{1,2,3, \ldots\}$)
   \begin{align} \label{omigra}
\varOmega:=\{(p,q)\in \natu^2\colon p<q, \, p
\text{ odd and } q \text{ even}\}
   \end{align}
is the largest possible subset of $\{(p,q)\in
\natu^2\colon p \Le q\}$ for which
Problem~\ref{momentprob} has an affirmative
solution for $\varXi=\{p,q\}$. Surprisingly,
suitable counterexamples can be constructed even
when the underlying Hilbert space $\hh$ is
one-dimensional (see Theorem~\ref{main2} for
more details).
   \begin{theorem} \label{main2-w0}
Let $(p,q)\in \natu^2\backslash\varOmega$ be
such that $p\Le q$. Then there exist a Hilbert
space $\hh$, a selfadjoint operator $T\in
\ogr(\hh)$ and a semispectral measure $F\colon
\borel(\rbb) \to \ogr(\hh)$ with compact support
which is not spectral and such that
   \begin{equation*}
T^k=\int_{\rbb} x^k F(\D x), \quad k=p,q.
   \end{equation*}
   \end{theorem}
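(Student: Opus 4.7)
The plan is to show that a one-dimensional Hilbert space $\hh=\cbb$ already suffices to produce all required counterexamples. Under this identification a selfadjoint $T\in\ogr(\hh)$ becomes a real number $t$, a semispectral measure $F$ becomes a compactly supported probability measure $\mu$ on $\rbb$ (since $F(\rbb)=1$), the failure of spectrality becomes the statement that $\mu$ is not a Dirac measure, and the moment equations read $t^k=\int_{\rbb} x^k\,\D\mu(x)$. Thus the task reduces to producing, for every $(p,q)\in\natu^2\setminus\varOmega$ with $p\Le q$, a real number $t$ and a non-Dirac compactly supported probability measure $\mu$ on $\rbb$ such that $t^p=\int x^p\,\D\mu$ and $t^q=\int x^q\,\D\mu$. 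All measures constructed below will be finitely supported, so compactness of the support is automatic.

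I would then split $\natu^2\setminus\varOmega$ into four disjoint families and exhibit a concrete pair $(\mu,t)$ in each. When $p=q$, any non-Dirac $\mu$ works, say $\mu=\tfrac12(\delta_0+\delta_1)$, with $t$ equal to the real $p$-th root of $\int x^p\,\D\mu$ (well defined: for odd $p$ the real $p$-th root exists for every real, while for even $p$ this moment is non-negative). When $p<q$ and both $p,q$ are even, take $\mu=\lambda\delta_1+(1-\lambda)\delta_{-1}$ with any $\lambda\in(0,1)$ and $t=1$: both moments equal $1$. When $p<q$ and both are odd, take the symmetric measure $\mu=\tfrac12(\delta_1+\delta_{-1})$ and $t=0$: both moments vanish by symmetry.

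The subtle case, and the main obstacle, is $p<q$ with $p$ even and $q$ odd; here two-point constructions are inadequate. Indeed, for a symmetric $\mu$ the $q$-th moment vanishes, forcing $t=0$ and hence $\int x^p\,\D\mu=0$; since $p$ is even, this forces $\mu=\delta_0$, contradicting the non-Dirac requirement. The remedy is to introduce a third atom. I would pick any $t\in(0,1)$ and set
\[
\mu=\alpha\delta_{-1}+\beta\delta_0+\gamma\delta_1,\qquad \alpha=\tfrac{1}{2}(t^p-t^q),\quad \beta=1-t^p,\quad \gamma=\tfrac{1}{2}(t^p+t^q).
\]
Because $p<q$ and $0<t<1$ one has $0<t^q<t^p<1$, so $\alpha,\beta,\gamma>0$; they sum to $1$, and a direct calculation using $(-1)^p=1$ and $(-1)^q=-1$ gives $\int x^p\,\D\mu=\alpha+\gamma=t^p$ and $\int x^q\,\D\mu=\gamma-\alpha=t^q$. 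Since $\mu$ is genuinely supported on three distinct points, it is not a Dirac measure, and this completes the construction for the remaining case.
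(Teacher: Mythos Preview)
Your proof is correct and covers all four subcases of $(p,q)\in\natu^2\setminus\varOmega$ with $p\Le q$. The route, however, differs from the paper's in two of the cases.

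The paper (Lemma~\ref{uklem} and Theorem~\ref{main2}) insists throughout on \emph{two-atom} measures $\mu=\alpha\delta_{\lambda_1}+\beta\delta_{\lambda_2}$ and works with the normalisation $t=1$. For $p$ even, $q$ odd it applies the intermediate value theorem to $\phi(x)=\frac{1-x^p}{1+x^q}$ on $[0,1]$ to locate $\lambda_1>1$ and $\lambda_2\in(-1,0)$; for $p,q$ both odd it argues similarly with $\psi(x)=\frac{1+x^p}{1+x^q}$ on $[1,\infty)$. Your explicit three-atom measure at $\{-1,0,1\}$ in the first case replaces an existence argument by a formula, and your symmetric measure $\tfrac12(\delta_1+\delta_{-1})$ with $t=0$ in the both-odd case is strictly simpler than the paper's construction. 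The trade-off is that the paper's uniform two-point format feeds directly into the explicit $2\times2$ dilation $S$ in Theorem~\ref{main2}(ii), which your three-atom example does not.

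One small inaccuracy: your claim that ``two-point constructions are inadequate'' when $p$ is even and $q$ is odd is not justified---you only rule out \emph{symmetric} two-point measures, and the paper shows that asymmetric two-point solutions do exist. This does not affect the validity of your argument, since the three-atom construction stands on its own; it is merely an incorrect aside.
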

The proofs of Theorems~\ref{main1}, \ref{poturs}
and \ref{main2-w0} will be given in
Sections~\ref{Sec.3}, \ref{Sec.4} and
\ref{Sec.5}, respectively. In
Section~\ref{Sec.2} we provide the basic facts
on operator monotone functions and the related
operator inequalities needed in this paper.
Section~\ref{Sec.6} contains additional
counterexamples (including the case of infinite
dimensional spaces) related to the Fibonacci
sequence. Finally, in Section~\ref{Sec.7} we
discuss the possibility of adapting the
two-moment characterizations of spectral
measures given in Theorems~\ref{maintw4} and
\ref{main1} to the case of semispectral measures
whose closed supports are not compact.
   \section{\label{Sec.2}Prerequisites}
In this paper, we use the following notation.
The fields of real and complex numbers are
denoted by $\rbb$ and $\mathbb{C}$,
respectively. The symbols $\zbb_{+}$,
$\mathbb{N}$ and $\rbb_+$ stand for the sets of
nonnegative integers, positive integers and
nonnegative real numbers, respectively. We write
$\borel(X)$ for the $\sigma$-algebra of all
Borel subsets of a topological Hausdorff space
$X$. The $C^*$-algebra of all continuous complex
functions on a compact Hausdorff space $K$
equipped with supremum norm is denoted by
$C(K)$. For $\lambda \in \rbb$,
$\delta_{\lambda}$ stands for the Borel
probability measure on $\rbb$ concentrated on
$\{\lambda\}$.

Let $\hh$ and $\kk$ be (complex) Hilbert spaces.
Denote by $\ogr(\hh, \kk)$ the Banach space of
all bounded linear operators from $\hh$ to
$\kk$. If $A\in \ogr(\hh,\kk)$, then $A^*$,
$\nul(A)$ and $\ran(A)$ stand for the adjoint,
the kernel and the range of $A$, respectively.
It is well known that $\ogr(\hh):=\ogr(\hh,\hh)$
is a $C^*$-algebra with unit $I$, where
$I=I_{\hh}$ denotes the identity operator on
$\hh$. We say that $A\in \ogr(\hh)$ is
\textit{selfadjoint} if $A=A^*$,
\textit{positive} if $\langle Ah,h\rangle \Ge 0$
for all $h\in \hh$ and an \textit{orthogonal
projection} if $A=A^*$ and~$A=A^2$.

Let $\ascr$ be a $\sigma$-algebra of subsets of
a set $X$ and let $F\colon \ascr \to \ogr(\hh)$
be a semispectral measure. Denote by $L^1(F)$
the vector space of all $\ascr$-measurable
functions $f\colon X \to \cbb$ such that
$\int_{X} |f(x)| \langle F(\D x)h, h\rangle <
\infty$ for all $h\in \hh$. Then for every $f\in
L^1(F)$, there exists a unique operator $\int_X
f \D F \in \ogr(\hh)$ such that (see e.g.,
\cite[Appendix]{Sto92})
   \begin{align} \label{form-ua}
\Big\langle\int_X f \D F h, h\Big\rangle
= \int_X f(x) \langle F(\D x)h, h\rangle,
\quad h\in\hh.
   \end{align}
If $F$ is a spectral measure, then $\int_X f \D
F$ coincides with the usual spectral integral.
In particular, if $F$ is the spectral measure of
a selfadjoint operator $A\in \ogr(\hh)$, then we
write $f(A)=\int_{\mathbb{R}} f \D F$ for any
$F$-essentially bounded Borel function $f\colon
\rbb \to \rbb$; the map $f \mapsto f(A)$ is
called the Stone-von Neumann functional
calculus. For more information needed in this
article on spectral integrals, including the
spectral theorem for selfadjoint operators and
the Stone-von Neumann functional calculus, we
refer the reader to \cite{Rud73,Weid80,Sch12}.

Let $J\subseteq \rbb$ be an interval (which may
be open, half-open, or closed; bounded or
unbounded). A continuous function $f \colon J
\rightarrow \rbb$ is said to be \textit{operator
monotone} if $f(A)\Le f(B)$ for any two
selfadjoint operators $A,B\in\ogr(\hh)$ such
that $A\Le B$ and the spectra of $A$ and $B$ are
contained in $J$. In \cite{Lo34}, L\"owner
proved that a continuous function defined on an
open interval is operator monotone if and only
if it has an analytic continuation to the
complex upper half-plane which is a Pick
function (see also \cite{Dono74,Han13}).
Operator monotone functions have integral
representations with respect to suitable
positive Borel measures. In particular, a
continuous function \mbox{$f \colon (0, \infty)
\rightarrow \rbb$} is operator monotone if and
only if there exists a positive Borel measure
$\nu$ on $[0,\infty)$ such that $\int_0^\infty
\frac{1}{1+\lambda^2}\D \nu(\lambda)<\infty$ and
   \begin{equation*} \label{repbol}
f(t)=\alpha +\beta t +\int_0^\infty
\Big(\frac{\lambda}{1+\lambda^2} -
\frac{1}{t+\lambda}\Big) \D \nu(\lambda),
\quad t \in (0,\infty),
   \end{equation*}
where $\alpha\in \rbb$ and $\beta \in \rbb_+$
(see \cite[Theorem~5.2]{Han13} or \cite[p.\
~144]{Bha97}). The most important example of an
operator monotone function is $f\colon
[0,\infty)\ni t\rightarrow t^p\in\rbb$ for $p\in
(0,1)$. This function has the following integral
representation (see
\cite[Exercise~V.1.10(iii)]{Bha97} or
\cite[Exercise~V.4.20]{Bha97})
   \begin{equation} \label{Hync-Luv}
t^p=\frac{\sin p\pi}{\pi} \int_0^\infty
\frac{t\lambda^{p-1}}{t+\lambda} \D
\lambda, \quad t \in [0,\infty).
   \end{equation}
Operator monotone functions are related to the
Hansen inequality \cite{Han80}. In
\cite[Lemma~2.2]{uch93}, Uchiyama gave a
necessary and sufficient condition for equality
to hold in the Hansen inequality when the
external factor is a nontrivial orthogonal
projection (see the ``moreover'' part of
Theorem~\ref{hans} below; see also the paragraph
before \cite[Theorem 2.4]{P-S} showing why the
separability of $\hh$ can be dropped).
   \begin{theorem}[\cite{Han80,uch93}]
\label{hans} Let $A\in \ogr(\hh)$ be a
positive operator, $T \in \ogr(\hh)$ be
a contraction and $f\colon
[0,\infty)\rightarrow \rbb$ be a
continuous operator monotone function
such that $f(0)\Ge 0$. Then
   \begin{equation} \label{Han-inq}
T^*f(A)T \Le f(T^*AT).
   \end{equation}
Moreover, if $f$ is not an affine function and
$T$ is an orthogonal projection such that $T
\neq I$, then equality holds in \eqref{Han-inq}
if and only if $TA=AT$ and $f(0)=0$.
   \end{theorem}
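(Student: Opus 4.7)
The plan is to prove Theorem~\ref{hans} along the lines of Hansen's original 1980 argument and then extract the equality case following Uchiyama. The core idea is to exhibit $f$ as a positive superposition of the elementary resolvent functions $\phi_\lambda(t)=\lambda t/(t+\lambda)=\lambda-\lambda^2(t+\lambda)^{-1}$ ($\lambda>0$), for which \eqref{Han-inq} reduces to the operator Jensen inequality $(V^*MV)^{-1}\Le V^*M^{-1}V$ associated with a natural isometric dilation $V$ of the contraction $T$.

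For the inequality, first split $f(t)=f(0)+g(t)$ where $g$ is operator monotone on $[0,\infty)$ with $g(0)=0$. Since $T^*T\Le I$ and $f(0)\Ge 0$, we have $f(0)T^*T\Le f(0)I$, so it suffices to establish $T^*g(A)T\Le g(T^*AT)$. By L\"owner's theorem, $g$ admits a representation $g(t)=\alpha t+\int_{(0,\infty)}\phi_\lambda(t)\D\mu(\lambda)$ with $\alpha\Ge 0$ and a positive Borel measure $\mu$ on $(0,\infty)$. The linear part contributes trivial equality. For each fixed $\lambda>0$, introduce the isometry $V\colon\hh\to\hh\oplus\hh$ defined by $Vh=(Th,(I-T^*T)^{1/2}h)$ and the positive operator $M=(A+\lambda I)\oplus \lambda I\in\ogr(\hh\oplus\hh)$. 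A direct computation gives $V^*MV=T^*AT+\lambda I$ and $V^*M^{-1}V=T^*(A+\lambda)^{-1}T+\lambda^{-1}(I-T^*T)$, so the inequality $T^*\phi_\lambda(A)T\Le \phi_\lambda(T^*AT)$ is equivalent to $(V^*MV)^{-1}\Le V^*M^{-1}V$, an instance of Jensen's operator inequality for the operator convex function $t\mapsto t^{-1}$. Integrating against $\mu$ yields \eqref{Han-inq}.

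For the equality case, assume $T$ is an orthogonal projection with $T\neq I$, $f$ is not affine, and equality holds in \eqref{Han-inq}. Decompose $\hh=T\hh\oplus(I-T)\hh$ and write $A$ in block form with diagonal blocks $A_{11},A_{22}$ and off-diagonal block $A_{12}$. Then $Tf(A)T$ has $(2,2)$-block equal to $0$, while $f(TAT)$ has $(2,2)$-block $f(0)I_{(I-T)\hh}$, so equality forces $f(0)=0$. With this, the integral representation reduces to $f(t)=\alpha t+\int_{(0,\infty)}\phi_\lambda(t)\D\mu(\lambda)$, and non-affinity of $f$ means $\mu\neq 0$. The Schur complement formula yields
\begin{equation*}
[(A+\lambda)^{-1}]_{11}=\bigl(A_{11}+\lambda-A_{12}(A_{22}+\lambda)^{-1}A_{12}^*\bigr)^{-1}\Ge (A_{11}+\lambda)^{-1},
\end{equation*}
with equality if and only if $A_{12}(A_{22}+\lambda)^{-1}A_{12}^*=0$, equivalently $A_{12}=0$. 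Consequently the operator-valued integrand
\begin{equation*}
\phi_\lambda(A_{11})-[\phi_\lambda(A)]_{11}=\lambda^2\bigl([(A+\lambda)^{-1}]_{11}-(A_{11}+\lambda)^{-1}\bigr)
\end{equation*}
is $\Ge 0$ and, since its integral against $\mu$ vanishes, must equal $0$ for $\mu$-a.e.~$\lambda$. Because $\mu\neq 0$ such a $\lambda$ exists, forcing $A_{12}=0$, i.e., $TA=AT$. The converse ($TA=AT$ and $f(0)=0$ implying equality in \eqref{Han-inq}) is immediate, as then $T$ commutes with $f(A)$ and both sides of \eqref{Han-inq} collapse to $Tf(A)$.

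The main technical subtlety lies in the equality analysis: one has to separate the contributions of the constant, linear, and integral parts of $f$, use the uniqueness of the L\"owner representation to encode non-affinity as $\mu\neq 0$, and invoke the Schur complement characterization at a single $\lambda\in\supp\mu$. A minor preliminary issue is the passage from the representation displayed in the excerpt for $(0,\infty)$ to the analogous one on $[0,\infty)$ with $f(0)\Ge 0$, which follows from L\"owner's theorem by continuity at~$0$.
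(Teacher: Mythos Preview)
The paper does not prove Theorem~\ref{hans}; it is stated with citations to \cite{Han80,uch93} as a known result and then used as a black box in the proof of Theorem~\ref{main1}. Your argument is correct and follows precisely the classical route of those references: Hansen's isometric-dilation reduction of \eqref{Han-inq} to the operator convexity of $t\mapsto t^{-1}$ via the L\"owner integral representation, and Uchiyama's Schur-complement analysis of the $(1,1)$-block to extract $A_{12}=0$ in the equality case. There is nothing in the paper to compare against beyond the citations, and your write-up matches what one finds there.
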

The reader is referred to
\cite{Lo34,Dono74,Han80,Bha97,Han13,Sim19} for
the fundamentals of the theory of operator
monotone functions.

A linear map $\varPhi \colon \mathcal{A}
\rightarrow \mathcal{ B}$ between unital
$C^*$-algebras is said to be {\em positive} if
$\varPhi(a) \Ge 0$ for every $a\in \mathcal{A}$
such that $a \Ge 0$. The map $\varPhi$ is called
{\em unital} if it preserves the units. If
$\varPhi$ is positive and unital, then the
following inequality, called Kadison's
inequality (see \cite{Kad52}), holds:
   \begin{equation} \label{kadison}
\text{$\varPhi(a^2)\Ge \varPhi(a)^2$ for all $a
\in \mathcal{A}$ such that $a=a^*$.}
   \end{equation}
In this paper we will need the following
generalization of Kadison's inequality.
   \begin{theorem}[{\cite[Theorem 2]{L-R74}}] \label{L-R}
Let $R\in \ogr(\hh,\kk)$ and let
$\varPhi\colon\ogr(\kk)\to\ogr(\hh)$ be the
positive linear map defined by
   \begin{align*}
\varPhi(X)=R^*XR,\quad X\in \ogr(\kk).
   \end{align*}
Then for all $A,B\in\ogr(\kk)$, the net
$\{\varPhi(A^*B)(\varPhi(B^*B)+\varepsilon
I)^{-1}\varPhi(B^*A)\}_{\varepsilon>0}$ is
convergent in the strong operator topology as
$\varepsilon \downarrow 0$ and
   \begin{align*}
\varPhi(A^*A)\Ge {\slim}_{\varepsilon\downarrow
0} \varPhi(A^*B)(\varPhi(B^*B)+\varepsilon
I)^{-1}\varPhi(B^*A).
   \end{align*}
   \end{theorem}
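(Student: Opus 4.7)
The plan is to deduce everything from the $2$-positivity of $\varPhi$, followed by a Schur-complement argument with an $\varepsilon$-regularization, and then a soft monotone-limit step. First, I would observe that for any $A,B\in \ogr(\kk)$ the block operator $\begin{pmatrix} A^*A & A^*B \\ B^*A & B^*B \end{pmatrix}$ equals $\begin{pmatrix} A & B\end{pmatrix}^*\begin{pmatrix} A & B\end{pmatrix}\Ge 0$, and conjugation by $\mathrm{diag}(R,R)$ preserves positivity. Since this conjugation acts on a $2\times 2$ block operator matrix by applying $\varPhi$ entrywise, one obtains
\begin{equation*}
\begin{pmatrix} \varPhi(A^*A) & \varPhi(A^*B) \\ \varPhi(B^*A) & \varPhi(B^*B) \end{pmatrix} \Ge 0 \quad \text{in } \ogr(\hh \oplus \hh).
\end{equation*}

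Next, adding $\varepsilon I$ to the $(2,2)$-entry only increases positivity, so for every $\varepsilon>0$,
\begin{equation*}
\begin{pmatrix} \varPhi(A^*A) & \varPhi(A^*B) \\ \varPhi(B^*A) & \varPhi(B^*B)+\varepsilon I \end{pmatrix} \Ge 0.
\end{equation*}
Since $S_\varepsilon:=\varPhi(B^*B)+\varepsilon I$ is now strictly positive and hence invertible, a standard Schur-complement factorization (the block congruence by $\mathrm{diag}(I,\,S_\varepsilon^{-1/2})$ combined with completing the square) shows that positivity of this $2\times 2$ block operator is equivalent to
\begin{equation*}
\varPhi(A^*A) \Ge \varPhi(A^*B)(\varPhi(B^*B)+\varepsilon I)^{-1}\varPhi(B^*A),
\end{equation*}
which is the desired inequality for each fixed $\varepsilon > 0$.

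To obtain SOT convergence, I would use a monotonicity argument. Set $T_\varepsilon := \varPhi(A^*B)\,S_\varepsilon^{-1}\,\varPhi(B^*A)$. If $0<\varepsilon_1<\varepsilon_2$, then $S_{\varepsilon_1}\Le S_{\varepsilon_2}$ implies $S_{\varepsilon_2}^{-1}\Le S_{\varepsilon_1}^{-1}$, and the map $Y\mapsto \varPhi(A^*B)\,Y\,\varPhi(B^*A)$ preserves the positive cone, so $T_{\varepsilon_2}\Le T_{\varepsilon_1}$. Thus $\{T_\varepsilon\}_{\varepsilon>0}$ is a net of positive operators that is monotonically increasing as $\varepsilon \downarrow 0$ and is uniformly bounded above by $\varPhi(A^*A)$. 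A Vigier-type monotone convergence theorem for self-adjoint operators then yields SOT convergence, and the bound is inherited by the limit.

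The main obstacle is making the Schur-complement equivalence rigorous in the operator setting: in infinite dimensions, the factorization and the equivalence with the Schur complement are valid precisely because $S_\varepsilon$ is boundedly invertible for $\varepsilon>0$. Without the $\varepsilon$-regularization the Schur complement would not make sense in general, which is exactly why the limit is taken only after the inequality has been established for each positive $\varepsilon$.
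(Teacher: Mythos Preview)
The paper does not supply its own proof of this theorem: it is quoted verbatim from \cite{L-R74} as a prerequisite and then applied in Section~\ref{Sec.3}. So there is no in-paper argument to compare against.

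That said, your proposal is correct and is essentially the classical Lieb--Ruskai argument. The three ingredients are exactly right: (a) the map $\varPhi(X)=R^*XR$ is completely positive (in particular $2$-positive), so the Gram-type block $\left[\begin{smallmatrix}\varPhi(A^*A)&\varPhi(A^*B)\\ \varPhi(B^*A)&\varPhi(B^*B)\end{smallmatrix}\right]$ is positive; (b) after the $\varepsilon$-perturbation of the $(2,2)$ corner, the Schur complement characterization (valid because $S_\varepsilon$ is boundedly invertible) gives the inequality for each $\varepsilon>0$; (c) the net $T_\varepsilon$ is increasing as $\varepsilon\downarrow 0$ (since $t\mapsto t^{-1}$ is operator antitone and $Y\mapsto CYC^*$ with $C=\varPhi(A^*B)$, $C^*=\varPhi(B^*A)$ is order-preserving) and bounded above by $\varPhi(A^*A)$, so Vigier's monotone convergence theorem yields SOT convergence with the limit still dominated by $\varPhi(A^*A)$. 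One small cosmetic point: when you say the map ``preserves the positive cone'' you are implicitly using $\varPhi(B^*A)=\varPhi(A^*B)^*$, which holds because $\varPhi$ is $*$-preserving; it is worth making that explicit.
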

   \section{\label{Sec.3}Proof of Theorem~\ref{main1}}
We begin with the following lemma which gives a
necessary and sufficient condition for equality
to hold in a Kadison type inequality (cf.\
\eqref{kadison}). Although this is a known fact
even for unbounded operators (see \cite[Lemmas~1
and 2]{K-dM87}), we will provide a brief
algebraic proof for the reader's convenience.
Note also that part (iii) of
Lemma~\ref{kadlemma} below is \cite[Lemma in
Sec.\ ~6]{Fug83}.
   \begin{lemma}
\label{kadlemma} Let $T\in \ogr(\hh)$ be a
selfadjoint operator and $P \in \ogr(\hh)$ be an
orthogonal projection. Then the following
statements are valid{\em :}
   \begin{enumerate}
   \item[(i)] $(PTP)^2 \Le PT^{2}P$,
   \item[(ii)] equality holds in {\em (i)} if and only if
$PT=TP$,
   \item[(iii)] if $T$ is an orthogonal projection, then $PTP$
is an orthogonal projection if and only if $PT=TP$.
   \end{enumerate}
   \end{lemma}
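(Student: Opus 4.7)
The plan is to reduce everything to a single algebraic identity expressing the ``defect'' $PT^2P - (PTP)^2$ as a manifestly positive operator, from which all three parts follow with essentially no further work.

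For (i), I would insert $I = P + (I-P)$ between the two factors of $T$ in $PT^2P$ and compute
\[
PT^2 P = PT\bigl(P + (I-P)\bigr)TP = PTPTP + PT(I-P)TP = (PTP)^2 + \bigl((I-P)TP\bigr)^{*}(I-P)TP,
\]
where in the last step I use that $I-P$ is selfadjoint and idempotent, together with the selfadjointness of $T$. The second summand is positive, so $(PTP)^2 \le PT^2P$.

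For (ii), equality in (i) is equivalent to $\bigl((I-P)TP\bigr)^{*}(I-P)TP=0$, and since $X^{*}X = 0$ forces $X=0$, this is equivalent to $(I-P)TP=0$, i.e.\ $TP=PTP$. Taking adjoints (using that $T$, $P$ and $PTP$ are selfadjoint) yields $PT = PTP = TP$. Conversely, $PT=TP$ gives $TP=PTP$ at once, hence $(I-P)TP=0$.

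For (iii), note that $PTP$ is always selfadjoint, so it is an orthogonal projection iff $(PTP)^{2}=PTP$. When $T$ is an orthogonal projection we have $T^{2}=T$, so (i) collapses to $(PTP)^{2}\le PTP$, and $PTP$ is an orthogonal projection iff equality holds here, which by (ii) is equivalent to $PT=TP$. There is no serious obstacle: the whole argument rides on the decomposition $I=P+(I-P)$ together with the elementary fact that $X^{*}X=0 \Leftrightarrow X=0$.
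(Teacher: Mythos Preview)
Your proof is correct and follows essentially the same route as the paper: the identity $PT^{2}P-(PTP)^{2}=PT(I-P)TP$, written as a positive operator of the form $X^{*}X$, drives all three parts. The only cosmetic difference is that the paper writes the positive term as $(TP)^{*}(I-P)TP$ and phrases the vanishing condition via $\ran(TP)\subseteq\nul(I-P)$, whereas you use $X^{*}X=0\Leftrightarrow X=0$ directly; the substance is identical.
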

   \begin{proof} (i) This is a direct consequence of the
following algebraic identities:
   \begin{align} \notag
PT^{2}P-(PTP)^2 & = PT^{2}P- PTPTP
   \\ \notag
& = PT(I-P)TP
   \\ \label{kadP2}
& = (TP)^*(I-P)TP \Ge 0.
   \end{align}

(ii) It follows from \eqref{kadP2} that equality holds in (i)
if and only if
   \begin{equation*}
(TP)^*(I-P)TP=0,
   \end{equation*}
or equivalently if and only if
   \begin{align*}
\ran(TP)\subseteq \nul((I-P)^{\frac{1}{2}})=\nul
(I-P),
   \end{align*}
which in turn is equivalent to $(I-P)TP=0$. The last equality
holds if and only if $TP=PTP$, which by $(PTP)^*=PTP$ is
equivalent to $PT=TP$.

(iii) This is a direct consequence of (ii) because $PTP$ is
an orthogonal projection if and only if $(PTP)^2 = PT^2P$.
   \end{proof}
For our further considerations, the following
fact is fundamental. In particular, in view of
Naimark's dilation theorem (see Introduction),
it shows that Problems \ref{momentprob} and
\ref{dilationprob} are logically equivalent.
   \begin{lemma} \label{fuglemma}
Let $\hh$, $\kk$ be Hilbert spaces such that
$\hh\subseteq\kk$ and $P\in \ogr(\kk)$ be the
orthogonal projection of $\kk$ onto $\hh$.
Suppose that $F\colon \borel(\rbb) \to
\ogr(\hh)$ is a semispectral measure and
$E\colon \borel(\rbb) \to \ogr(\kk)$ is a
spectral measure such that
   \begin{equation} \label{fpeh}
F(\varDelta) = PE(\varDelta)|_\hh,\quad \varDelta \in
\borel(\rbb).
   \end{equation}
Set\/\footnote{Note that a priori the operator
$S$ may be unbounded (see
\cite[Theorem~5.9]{Sch12} for more details).}
$S:=\int_{\rbb} x E(\D x)$. Then the following
statements are valid{\em :}
   \begin{itemize}
   \item[(i)] $F$ is  spectral if and only if
$P$ commutes with~$E$ $($equivalently, $\hh$
reduces~$E$$)$,
   \item[(ii)] if $S\in
\ogr(\kk)$, then $F$ has compact support and
   \begin{align} \label{wrestie}
P S^k|_{\hh}=\int_{\rbb} x^k F(\D x), \quad k\in
\zbb_+,
   \end{align}
   \item[(iii)] if $F$ has compact support and
$\kk$ is minimal, that is,
   \begin{align} \label{minimity}
\kk = \bigvee\{E(\varDelta)\hh \colon \varDelta
\in \ascr\},
   \end{align}
then $E$ has compact support, $S\in \ogr(\kk)$
and $S=S^*$.
   \end{itemize}
   \end{lemma}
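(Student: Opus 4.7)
The plan is to handle the three parts in the order (i), (ii), (iii). Parts (i) and (ii) should follow quickly from Lemma~\ref{kadlemma}(iii) and the spectral theorem respectively, while the real work lies in (iii), where I will need to upgrade the compact support of $F$ to a compact support for $E$ using the minimality hypothesis \eqref{minimity}.

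For (i), I will use the fact that $F(\varDelta)=PE(\varDelta)P|_{\hh}$ means $PE(\varDelta)P$, viewed on $\kk=\hh\oplus\hh^{\perp}$, is the direct sum $F(\varDelta)\oplus 0$; hence $F(\varDelta)$ is an orthogonal projection on $\hh$ if and only if $PE(\varDelta)P$ is an orthogonal projection on $\kk$. Applying Lemma~\ref{kadlemma}(iii) with $T:=E(\varDelta)$, this is in turn equivalent to $PE(\varDelta)=E(\varDelta)P$. Letting $\varDelta$ vary over $\borel(\rbb)$, $F$ is spectral iff $P$ commutes with every $E(\varDelta)$, equivalently iff $\hh$ reduces $E$.

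For (ii), I would first notice that, because $Ph=h$ for $h\in\hh$, the scalar measures $\langle F(\cdot)h,h\rangle$ and $\langle E(\cdot)h,h\rangle$ coincide on $\hh$; this already implies $\supp F\subseteq\supp E$, which is compact since $S\in\ogr(\kk)$ is selfadjoint with $\supp E=\sigma(S)$. The spectral theorem next gives $S^{k}=\int_{\rbb} x^{k} E(\D x)$, so that for $h\in\hh$,
\[
\langle PS^{k}|_{\hh}h,h\rangle=\langle S^{k} h,h\rangle=\int_{\rbb} x^{k}\, \langle E(\D x)h,h\rangle=\int_{\rbb} x^{k}\, \langle F(\D x)h,h\rangle=\Big\langle\int_{\rbb} x^{k} F(\D x)\, h,h\Big\rangle.
\]
Polarization then upgrades this scalar identity to the operator identity \eqref{wrestie}.

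For (iii), which I expect to be the main obstacle, I will fix $M>0$ with $\supp F\subseteq[-M,M]$ and set $\varDelta:=\rbb\setminus[-M,M]$. Because $\varDelta\cap\supp F=\emptyset$, the scalar-measure identity from (ii) yields, for every $\varDelta'\in\borel(\rbb)$ and $h\in\hh$,
\[
\|E(\varDelta\cap\varDelta')h\|^{2}=\langle E(\varDelta\cap\varDelta')h,h\rangle=\langle F(\varDelta\cap\varDelta')h,h\rangle=0,
\]
so $E(\varDelta)$ annihilates every vector of the form $E(\varDelta')h$ with $h\in\hh$ and $\varDelta'\in\borel(\rbb)$. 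The minimality condition \eqref{minimity} then forces $E(\varDelta)=0$, hence $\supp E\subseteq[-M,M]$ is compact. Consequently $S=\int_{\rbb} x E(\D x)\in\ogr(\kk)$ with $S=S^{*}$, completing the proof.
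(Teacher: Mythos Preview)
Your proof is correct and follows essentially the same route as the paper for all three parts: the use of Lemma~\ref{kadlemma}(iii) via $PE(\varDelta)P=F(\varDelta)\oplus 0$ in (i), and the scalar-measure identity $\langle F(\cdot)h,h\rangle=\langle E(\cdot)h,h\rangle$ for $h\in\hh$ in (ii), match the paper exactly. For (iii) the paper simply cites external references (\cite{Ja02,Ju-St08}) for the fact that under minimality the closed supports of $E$ and $F$ coincide, whereas you supply the short direct argument showing $E(\varDelta)E(\varDelta')h=E(\varDelta\cap\varDelta')h=0$ and then invoking \eqref{minimity}; this is the same idea made self-contained rather than a different method.
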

   \begin{proof}
(i) Set $\hat{F}(\varDelta) = F(\varDelta)\oplus
0$ for $\varDelta \in \borel(\rbb)$, where $0$
stands for the zero operator on $\kk \ominus
\hh$. Then, by \eqref{fpeh}, $\hat{F}(\varDelta)
= PE(\varDelta)P$. Hence, observing that
$F(\varDelta)$ is an orthogonal projection if
and only if $\hat{F}(\varDelta)$ is an
orthogonal projection and using
Lemma~\ref{kadlemma}(iii), we obtain (i).

(ii) It follows from \eqref{fpeh} that the
closed support of $F$ is contained in the closed
support of $E$. Since $E$ has compact support
(because $S\in \ogr(\kk)$, see
\cite[Theorem~5.9]{Sch12}), so does $F$.
Applying the Stone-von Neumann functional
calculus, we~get
   \allowdisplaybreaks
   \begin{align*}
\langle P S^k|_{\hh} h, h \rangle = \langle S^k
h, h \rangle & = \int_{\rbb} x^k \langle E(\D
x)h,h\rangle
   \\
& \hspace{-1ex}\overset{\eqref{fpeh}}=
\int_{\rbb} x^k \langle F(\D x)h,h\rangle
   \\
& \hspace{-1ex}\overset{\eqref{form-ua}}=
\Big\langle \int_{\rbb} x^k F(\D
x)h,h\Big\rangle, \quad h \in \hh, \, k \in
\zbb_+,
   \end{align*}
which implies \eqref{wrestie}.

(iii) By \eqref{fpeh} and \eqref{minimity}, the
closed supports of the POV measures $E$ and $F$
coincide (see the proofs of
\cite[Theorem~4.4]{Ja02} and
\cite[Proposition~4(iii)]{Ju-St08}). Hence, the
closed support of $E$ is compact. As a
consequence, the operator $\int_{\rbb} x E(\D
x)$ is bounded and selfadjoint (see
\cite[Theorem~5.9]{Sch12}). This completes the
proof.
   \end{proof}
We are now in a position to prove the main
result of this paper, which provides a
two-moment characterization of spectral
measures.
   \begin{proof}[Proof of Theorem~\ref{main1}]
(i)$\Rightarrow$(ii) This is immediate from the
Stone-von Neumann functional calculus.

(ii)$\Rightarrow$(iii) Obvious.

(iii)$\Rightarrow$(i) It follows from Naimark's
dilation theorem (see Introduction) that there
exist a Hilbert space $\kk$ containing $\hh$ and
a spectral measure $E\colon \borel(\rbb) \to
\ogr(\kk)$ such that \eqref{fpeh} and
\eqref{minimity} hold, where $P\in \ogr(\kk)$ is
the orthogonal projection of $\kk$ onto $\hh$.
By Lemma~\ref{fuglemma}, $E$ has compact
support, the operator $S:=\int_{\rbb} x E(\D x)$
is bounded and selfadjoint, and the following
equalities are satisfied:
   \begin{align} \label{kon-1}
T^k = P S^k|_{\hh}, \quad k=p,q.
   \end{align}
First, we prove that $F$ is a spectral measure.
In view of Lemma~\ref{fuglemma}(i), it suffices
to show that $P$ commutes with $E$. For this, we
consider two cases.

{\sc Case 1.} $p\Le\ \frac{q}{2}$.

Let $\hat{T} \in \ogr(\kk)$ be defined by
$\hat{T} = T \oplus 0$, where $0$ stands for the
zero operator on $\kk \ominus \hh$. Set
$q^\prime=\frac{q}{2}$. Using
Lemma~\ref{kadlemma}(i) and then applying
Theorem~\ref{hans} to the positive operator
$S^{2q^\prime}$ and the operator monotone
function $f(t)=t^{\frac{p}{q^\prime}}$ (see
\eqref{Hync-Luv}), we deduce~that
   \allowdisplaybreaks
   \begin{align*}
\hat{T}^{2p} = (\hat{T}^{p})^2 &
\overset{\eqref{kon-1}} = (PS^{p}P)^2
   \\
& \hspace{.9ex} \Le PS^{2p}P
   \\
& \hspace{.9ex} \Le
(PS^{2q^\prime}P)^\frac{p}{q^\prime} =
(PS^{q}P)^\frac{2p}{q} \overset{\eqref{kon-1}} =
(\hat{T}^q)^{\frac{2p}{q}} \overset{(*)}=
\hat{T}^{2p},
   \end{align*}
where $(*)$ can be inferred from the hypothesis
that $q$ is even. This implies that
   \begin{equation*}
(PS^{p}P)^2= PS^{2p}P.
   \end{equation*}
It follows from Lemma~\ref{kadlemma}(ii) that
   \begin{equation*}
PS^{p}= S^{p}P.
   \end{equation*}
Hence, by \cite[Theorem~5.1]{Sch12}, $P$ commutes with
$E_{p}$, the spectral measure of $S^{p}$. By
\cite[Theorem~6.6.4]{Bir-Sol87}, $E_{p}$ is of the form
   \begin{align} \label{nosczeg}
E_{p}(\varDelta)=E(\varphi_{p}^{-1}(\varDelta)),
\quad \varDelta \in \borel(\rbb),
   \end{align}
where $\varphi_p\colon \rbb \to \rbb$ is a
function given by
   \begin{align} \label{fiub}
\varphi_{p}(x)=x^{p}, \quad x\in \rbb.
   \end{align}
Since the map $\borel(\rbb) \ni \varDelta
\mapsto \varphi_p^{-1}(\varDelta) \in
\borel(\rbb)$ is surjective (because $p$ is
odd), we deduce from \eqref{nosczeg} that $P$
commutes with $E$.

{\sc Case 2.} $p > \frac{q}{2}$.

Suppose, to the contrary, that $P$ does not
commute with $E$. This implies that $P\neq
I_{\kk}$. Set $q^\prime=\frac{q}{2}$ and
$r=p-q^\prime$. Since $p<q$ and $q$ is even, we
see that $r, q^\prime \in \natu$ and $0 <
\frac{r}{q^\prime}<1$. By Theorem~\ref{hans}
applied to the positive operator
$S^{2q^{\prime}}$ and the operator monotone
function $f(t)=t^{\frac{r}{q^\prime}}$, we~get
   \begin{align*}
\hat{T}^{2r} =
(\hat{T}^{2q^\prime})^{\frac{r}{q^\prime}}
\overset{\eqref{kon-1}} =
(PS^{2q^\prime}P)^{\frac{r}{q^\prime}} \Ge
PS^{2r}P.
   \end{align*}
This implies that
   \begin{equation} \label{3gw}
T^{2r}=(PS^{2q^\prime}|_\hh)^{\frac{r}{q^\prime}}\Ge PS^{2r}|_\hh.
   \end{equation}
Let $\varPhi\colon \ogr(\kk)\to \ogr(\hh)$ be
the positive unital linear map defined by
   \begin{equation*}
\varPhi(X) = P X|_{\hh}, \quad X\in \ogr(\kk).
   \end{equation*}
Applying Theorem \ref{L-R} to $A=S^r$,
$B=S^{q^{\prime}}$ and $R\in \ogr(\hh,\kk)$
defined by $Rh=h$ for $h\in \hh$ leads to
   \begin{align}\label{5gw}
P S^{2r}|_{\hh}=\varPhi(S^{2r}) \Ge
{\slim}_{\varepsilon\downarrow 0}
\varPhi(S^{p})(\varPhi(S^{q})+\varepsilon
I)^{-1}\varPhi(S^{p}).
   \end{align}
Let $G\colon \borel(\rbb) \to \ogr(\hh)$ be the
spectral measure of $T$. Using the Stone-von
Neumann functional calculus, we obtain
   \begin{align} \label{kra-kru}
\varPhi(S^{p})(\varPhi(S^{q})+\varepsilon
I)^{-1}\varPhi(S^{p}) \overset{\eqref{kon-1}} =
T^p(T^{q}+\varepsilon)T^p=\int_\mathbb{R}
\frac{x^{2p}}{x^{q}+\varepsilon}G(\D x).
   \end{align}
Applying Lebesgue's monotone convergence theorem
and the hypothesis that $q$ is even and $2p-q\in
\natu$, we deduce that
   \allowdisplaybreaks
   \begin{align*}
\lim_{\varepsilon\downarrow 0}
\langle\varPhi(S^{p}) (\varPhi(S^{q}) +
\varepsilon I)^{-1} \varPhi(S^{p})h, h\rangle &
\overset{\eqref{kra-kru}} =
\lim_{\varepsilon\downarrow 0}
\Big\langle\int_\mathbb{R}
\frac{x^{2p}}{x^q+\varepsilon} G(\D x)h,
h\Big\rangle
   \\
& \hspace{1ex}= \lim_{\varepsilon\downarrow 0}
\int_\mathbb{R}
\frac{x^{2p}}{x^q+\varepsilon}\langle G(\D
x)h,h\rangle
   \\
& \hspace{1ex}=\int_\mathbb{R} {x^{2p-q}}\langle
G(\D x)h, h\rangle
   \\
& \hspace{1ex}= \langle T^{2p-q}h,h\rangle =
\langle T^{2r}h,h\rangle, \quad h \in \hh.
   \end{align*}
Therefore, the net
$\{\varPhi(S^{p})(\varPhi(S^{q})+\varepsilon
I)^{-1}\varPhi(S^{p})\}_{\varepsilon>0}$ is
convergent to $T^{2r}$ in the weak operator
topology. Combined with \eqref{5gw}, this
implies that
   \begin{align} \label{dyv-ci}
P S^{2r}|_{\hh} \Ge T^{2r}.
   \end{align}
Using \eqref{3gw} and \eqref{dyv-ci}, we get
   \begin{equation*}\label{6gw}
T^{2r} =
(PS^{2q^\prime}|_\hh)^{\frac{r}{q^\prime}} \Ge
PS^{2r}|_\hh\Ge T^{2r}.
   \end{equation*}
This yields
   \begin{equation*}
(PS^{2q^\prime}|_\hh)^{\frac{r}{q^\prime}}=
PS^{2r}|_\hh,
   \end{equation*}
or equivalently
   \begin{equation*}
(PS^{2q^\prime}P)^{\frac{r}{q^\prime}}=
PS^{2r}P,
   \end{equation*}
so equality holds in the Hansen inequality.
Thus, by the moreover part of
Theorem~\ref{hans}, $PS^{q}=S^{q}P$ (recall that
$q=2q^\prime$). Hence
   \begin{align*}
\hat{T}^{q n}
\overset{\eqref{kon-1}}=(PS^{q}P)^n = (PS^{q})^n
= PS^{q n}P,\quad n\in \natu.
   \end{align*}
Therefore, $T^{q n}=PS^{q n}|_\hh$ for all $n\in
\natu$. Take any $n_0\in \natu$ such that $p\Le
\frac{q n_0}{2}$. Then by \eqref{kon-1}, we have
   \begin{align*}
T^k = P S^k|_{\hh},\qquad k=p, \, q n_0.
   \end{align*}
Since $p\Le \frac{q n_0}{2}$, we can apply
Case~$1$ to the pair $(p,q n_0)$ in place of
$(p,q)$. We then obtain that $P$ commutes with
$E$, which is a contradiction.

Summarizing, we have proved that in both cases
$F$ is a spectral measure. Therefore, to
complete the proof it remains to show that $F$
is the spectral measure of $T$. Since $T^p =
\int_{\rbb} x^p F(\D x)$ (by (iii)) and $T^p =
\int_{\rbb} x^p G(\D x)$ (by Stone-von Neumann
functional calculus), an application of
\cite[Theorem~6.6.4]{Bir-Sol87} shows that
$F\circ \varphi_{p}^{-1}$ and $G\circ
\varphi_{p}^{-1}$ are spectral measures of
$T^p$, where $G$ is the spectral measure of $T$,
$\varphi_{p}$ is as in \eqref{fiub} and
   \begin{align*}
\text{$(F\circ \varphi_{p}^{-1})(\varDelta) =
F(\varphi_{p}^{-1}(\varDelta))$ and $(G\circ
\varphi_{p}^{-1})(\varDelta) =
G(\varphi_{p}^{-1}(\varDelta))$ for $\varDelta
\in \borel(\rbb)$.}
   \end{align*}
By the uniqueness in
\cite[Theorem~6.1.1]{Bir-Sol87}, $F\circ
\varphi_p^{-1} = G\circ \varphi_p^{-1}$. Since
the map $\borel(\rbb) \ni \varDelta \mapsto
\varphi_p^{-1}(\varDelta) \in \borel(\rbb)$ is
surjective (because $p$ is odd), we deduce that
$F=G$, so $F$ is the spectral measure of $T$.
This completes the proof.
   \end{proof}
   We conclude this section by providing some
inequalities for moments of a semispectral
measure on the real line. Though it is a
well-known fact (see \cite{Bi94} and
references therein), we outline its short
proof for the reader's convenience.
   \begin{prop}  \label{cukia}
Let $F\colon \borel(\rbb) \to \ogr(\hh)$ be a
semispectral measure with compact support. Then
   \begin{equation*}
   \left[\begin{smallmatrix} I & \int_{\rbb} x
F(\D x) & \cdots & \int_{\rbb} x^n F(\D x)
   \\[1ex]
\int_{\rbb} x F(\D x) & \int_{\rbb} x^2 F(\D x)
& \cdots & \int_{\rbb} x^{n+1} F(\D x)
   \\
\vdots & \vdots & \ddots & \vdots
   \\[1ex]
\int_{\rbb} x^n F(\D x) & \int_{\rbb} x^{n+1}
F(\D x) & \cdots & \int_{\rbb} x^{2n} F(\D x)
   \end{smallmatrix} \right] \Ge 0,\quad n\in \zbb_+.
   \end{equation*}
   \end{prop}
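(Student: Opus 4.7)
The plan is to dilate $F$ to a bounded spectral measure on a larger Hilbert space and then recognize the displayed block matrix as the Gram matrix of a natural family of vectors in the dilation space.

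First, by Naimark's dilation theorem together with Lemma~\ref{fuglemma}(iii), I would produce a Hilbert space $\kk \supseteq \hh$, a compactly supported spectral measure $E\colon \borel(\rbb) \to \ogr(\kk)$ satisfying $F(\varDelta)=PE(\varDelta)|_{\hh}$ (where $P\in\ogr(\kk)$ is the orthogonal projection onto $\hh$), and a bounded selfadjoint operator $S:=\int_{\rbb} x\, E(\D x)$. Lemma~\ref{fuglemma}(ii) then identifies each operator moment of $F$ as
\begin{equation*}
\int_{\rbb} x^k F(\D x)=PS^k|_{\hh}, \quad k\in \zbb_+.
\end{equation*}

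Next, I would unwind positive semidefiniteness of the block matrix into the scalar statement
\begin{equation*}
\sum_{i,j=0}^{n} \Big\langle\int_{\rbb} x^{i+j} F(\D x)\, h_j, h_i\Big\rangle \Ge 0, \qquad h_0,\ldots,h_n\in\hh.
\end{equation*}
Substituting the compression formula, using that $\langle Py, h_i\rangle = \langle y, h_i\rangle$ for $y\in \kk$ and $h_i\in\hh$, and using selfadjointness of every power $S^i$, the sum rewrites as
\begin{equation*}
\sum_{i,j=0}^{n} \langle S^{i+j} h_j, h_i\rangle = \sum_{i,j=0}^{n} \langle S^j h_j, S^i h_i\rangle = \Big\|\sum_{i=0}^{n} S^i h_i\Big\|^2,
\end{equation*}
which is manifestly nonnegative.

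I expect no serious obstacle: the argument is a direct combination of the compression formula of Lemma~\ref{fuglemma} with the Gram-matrix identity applied to $S^0 h_0, S^1 h_1,\ldots, S^n h_n$ in $\kk$. The only minor point to check carefully is the passage from $\int x^k F(\D x) = PS^k|_{\hh}$ to the Gram sum, which uses the fact that $\hh$-valued vectors absorb $P$ in the inner product. If one preferred to avoid dilation entirely, one could work directly with the positive scalar measures $\langle F(\cdot)h,h\rangle$ and their polarized versions, appealing to classical Hamburger positivity of scalar moment matrices; but the Naimark route is considerably cleaner.
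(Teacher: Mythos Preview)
Your proposal is correct and follows essentially the same route as the paper's proof: dilate $F$ to a compactly supported spectral measure $E$ on $\kk\supseteq\hh$ via Naimark and Lemma~\ref{fuglemma}, then recognize the block-matrix quadratic form as $\big\|\sum_{k=0}^n S^k h_k\big\|^2$ with $S=\int_{\rbb} x\,E(\D x)$. The paper's only cosmetic difference is that it writes $\int_{\rbb} x^k E(\D x)$ in place of $S^k$.
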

   \begin{proof}
By Naimark's dilation theorem (see
Introduction), there exists a Hil\-bert space
$\kk$ containing $\hh$ and a spectral measure
$E\colon \borel(\rbb) \to \ogr(\kk)$ which
satisfies \eqref{fpeh} and \eqref{minimity}. By
Lemma~\ref{fuglemma}(iii), $E$ has compact
support. Applying the Stone-von Neumann
functional calculus, we obtain
   \begin{align*}
\sum_{j,k=0}^n \Big\langle \int_{\rbb} x^{j+k}
F(\D x) h_k, h_j\Big\rangle & =\sum_{j,k=0}^n
\Big\langle \int_{\rbb} x^{j+k} E(\D x) h_k,
h_j\Big\rangle
   \\
&= \Big\|\sum_{k=0}^n \int_{\rbb} x^k E(\D x)
h_k\Big\|^2 \Ge 0,
   \end{align*}
for all finite sequences $\{h_k\}_{k=0}^n
\subseteq \hh$.
   \end{proof}
   \begin{corollary} \label{istris}
Let $F\colon \borel(\rbb) \to \ogr(\hh)$ be a
semispectral measure with compact support. Then
$\mathrm{Var}(F) \Ge 0$, where $\mathrm{Var}(F)$
is as in \eqref{noise}.
   \end{corollary}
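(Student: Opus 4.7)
The plan is to read off the corollary as the $n=1$ case of Proposition~\ref{cukia}. Writing $M_1 = \int_{\rbb} x\,F(\D x)$ and $M_2 = \int_{\rbb} x^2\,F(\D x)$, the proposition yields
\[
\begin{bmatrix} I & M_1 \\ M_1 & M_2 \end{bmatrix} \Ge 0
\]
in $\ogr(\hh \oplus \hh)$, since $M_1$ is selfadjoint. From a positive $2\times 2$ operator matrix with invertible $(1,1)$ entry $I$, the Schur complement $M_2 - M_1 I^{-1} M_1 = M_2 - M_1^2 = \mathrm{Var}(F)$ is positive, which is exactly the claim.

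If one prefers not to invoke the Schur complement as a black box, the same conclusion follows by plugging a single choice of vectors into the quadratic form established in the proof of Proposition~\ref{cukia}. Given $h \in \hh$, take $h_0 = -M_1 h$ and $h_1 = h$ (both lie in $\hh$, since $M_1 \in \ogr(\hh)$). Then
\[
\sum_{j,k=0}^{1} \Big\langle \int_{\rbb} x^{j+k} F(\D x)\, h_k, h_j \Big\rangle
= \langle M_1^2 h, h\rangle - 2\langle M_1^2 h, h\rangle + \langle M_2 h, h\rangle
= \langle \mathrm{Var}(F)\, h, h\rangle,
\]
and the left-hand side is nonnegative by Proposition~\ref{cukia}. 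As $h \in \hh$ was arbitrary, $\mathrm{Var}(F) \Ge 0$.

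There is essentially no obstacle: the content has already been done in Proposition~\ref{cukia} via the Naimark dilation, and the corollary is just the extraction of the two-by-two Schur complement (or, equivalently, the evaluation of the underlying positivity at a particular pair of vectors). The only tiny care needed is to note that $M_1$ is selfadjoint so that $M_1^* M_1 = M_1^2$, which is immediate from the definition of the first operator moment for a Borel semispectral measure on $\rbb$ with compact support.
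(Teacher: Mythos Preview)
Your proof is correct and follows essentially the same route as the paper: invoke Proposition~\ref{cukia} to obtain the positivity of the $2\times 2$ moment block (the paper writes ``$n=2$'', apparently referring to the matrix size, whereas you index it as $n=1$, which matches the statement of the proposition) and then read off the Schur complement $M_2-M_1 I^{-1}M_1=\mathrm{Var}(F)\Ge 0$, exactly the fact the paper cites from \cite{Dav58} and \cite{M-K-X19}. Your alternative verification by substituting $h_0=-M_1 h$, $h_1=h$ into the quadratic form is a clean explicit unpacking of the same Schur-complement step.
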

   \begin{proof}
Apply Proposition~\ref{cukia} with $n=2$ and use
the following well-known fact (see
\cite[Lemma~1]{Dav58}; see also
\cite[Theorem~5.1]{M-K-X19}): if $A, B \in
\ogr(\hh)$ are selfadjoint, $A$ is invertible in
$\ogr(\hh)$ and $X\in \ogr(\hh)$, then
$\left[\begin{smallmatrix} A & X
\\ X^* & B \end{smallmatrix}\right] \Ge 0$  if and
only if $B \Ge X^*A^{-1}X$.
   \end{proof}
   \section{\label{Sec.4}Proof of Theorem~\ref{poturs}}
Before proving the main result of this section,
we state the crucial lemma which seems to be of
some independent interest. We provide two proofs
of this lemma.
   \begin{lemma} \label{wkuwer}
Let $\mathcal{A}$ be a unital $C^*$-algebra,
$\varPhi\colon \mathcal{A} \to \ogr(\hh)$ be a
unital positive linear map and $a$ be a
selfadjoint element of $\mathcal{A}$. Then there
exists a unique semispectral measure $F\colon
\borel(\rbb) \to \ogr(\hh)$ such that $x^n \in
L^1(F)$ for all $n\in \zbb_+$ and
   \begin{align*}
\varPhi(a^n) = \int_{\rbb} x^n F(\D x), \quad
n\in \zbb_+.
   \end{align*}
Moreover, $F$ possesses the following
properties{\em :}
   \begin{enumerate}
   \item[(i)] $F$ has compact support,
   \item[(ii)] the closed support of $F$ is
contained in $\rbb_+$ whenever $a\Ge 0$.
   \end{enumerate}
   \end{lemma}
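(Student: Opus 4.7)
The plan is to reduce everything to a commutative setting via the continuous functional calculus, and then apply a vector-valued Riesz representation theorem to produce $F$. First I would consider the unital $C^*$-subalgebra $\mathcal{A}_a\subseteq\mathcal{A}$ generated by $a$. Since $a=a^*$, the Gelfand--Naimark theorem gives an isometric $*$-isomorphism $\Psi\colon C(\sigma(a))\to\mathcal{A}_a$ sending the identity function $\mathrm{id}$ to $a$; in particular $\Psi(\mathrm{id}^n)=a^n$ for every $n\in\zbb_+$. Composing with $\varPhi$ yields a unital positive linear map
\[
\Theta:=\varPhi\circ\Psi\colon C(\sigma(a))\to\ogr(\hh).
\]

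Next, I would invoke the standard Riesz--Kantorovich type representation for positive linear maps from $C(K)$ (with $K$ compact Hausdorff) into $\ogr(\hh)$: there exists a unique semispectral measure $F_0\colon\borel(\sigma(a))\to\ogr(\hh)$ such that
\[
\Theta(f)=\int_{\sigma(a)}f\,\D F_0,\qquad f\in C(\sigma(a)).
\]
(This is classical; it can be proved either by polarising $h\mapsto \langle\Theta(\cdot)h,h\rangle$ and applying the scalar Riesz theorem, or via Naimark's dilation applied to $\Theta$.) Define $F\colon\borel(\rbb)\to\ogr(\hh)$ by $F(\varDelta):=F_0(\varDelta\cap\sigma(a))$. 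Since $\sigma(a)$ is compact, $F$ has compact support, proving (i); and if $a\Ge 0$ then $\sigma(a)\subseteq\rbb_+$, which gives (ii). Taking $f(x)=x^n$ in the representation formula yields $\varPhi(a^n)=\int_{\rbb}x^n\,F(\D x)$ for every $n\in\zbb_+$, which is the required moment identity.

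Finally, uniqueness. Suppose $F'\colon\borel(\rbb)\to\ogr(\hh)$ is any semispectral measure with $x^n\in L^1(F')$ for all $n\in\zbb_+$ and the same operator moments as $F$. For each $h\in\hh$, set $\mu_h(\varDelta):=\langle F'(\varDelta)h,h\rangle$ and $\nu_h(\varDelta):=\langle F(\varDelta)h,h\rangle$; these are finite positive Borel measures on $\rbb$ with identical moment sequences $m_n=\langle\varPhi(a^n)h,h\rangle$. Since $\varPhi$ is unital and positive, hence contractive, we have $m_{2n}\Le\|\varPhi(a^{2n})\|\,\|h\|^2\Le\|a\|^{2n}\|h\|^2$, so $m_{2n}^{1/(2n)}$ remains bounded and Carleman's condition $\sum_n m_{2n}^{-1/(2n)}=\infty$ is satisfied. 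By Carleman's theorem the Hamburger moment problem is determinate, hence $\mu_h=\nu_h$. Polarisation yields $F'=F$.

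The main obstacle I anticipate is the proper citation of the vector-valued Riesz representation for positive maps on $C(K)$ into $\ogr(\hh)$; this is stated in various forms in the literature (for example in Paulsen's monograph on completely bounded maps, or as a consequence of Naimark's dilation theorem applied to $\Theta$), and I would simply cite it rather than reprove it. A minor delicate point is that in the uniqueness step $F'$ is not assumed a priori to have compact support, so one must justify determinacy, which is exactly what the Carleman estimate above provides.
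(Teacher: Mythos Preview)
Your argument is correct. The existence part is essentially a third variant, closest in spirit to the paper's second proof but more economical: instead of first applying Stinespring to get a $*$-representation and then applying Gelfand theory to the \emph{image} algebra $\overline{\pi(\mathcal{A})}$, you apply Gelfand--Naimark directly to the \emph{domain} $\mathcal{A}_a\cong C(\sigma(a))$ and then invoke the POV-valued Riesz representation for positive maps $C(K)\to\ogr(\hh)$. This bypasses the dilation step entirely and gives (i) and (ii) for free from $\sigma(a)\subseteq[-\|a\|,\|a\|]$ and $\sigma(a)\subseteq\rbb_+$. The paper's first proof, by contrast, works on the level of operator moment sequences (complete positivity via Stinespring, then Bisgaard's theorem), which is more hands-on but avoids citing the vector-valued Riesz theorem. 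Your uniqueness argument via Carleman is fine and in fact slightly more careful than the paper's first proof, which simply appeals to determinacy of compactly supported Hamburger sequences; note that your bound $m_{2n}\Le\|a\|^{2n}\|h\|^2$ already forces any representing measure to be supported in $[-\|a\|,\|a\|]$, so Carleman is overkill but certainly correct.
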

   \begin{proof}[First proof of Lemma~\ref{wkuwer}]
Replacing $\mathcal{A}$ by the unital
$C^*$-algebra generated by $\{a\}$, we may
assume without loss of generality that
$\mathcal{A}$ is commutative. Let $e$ denote the
unit of $\mathcal{A}$. According to
\cite[Corollary~2.9]{Paul86}, $\varPhi$ is
contractive and~therefore
   \begin{align} \label{surq2}
\|\varPhi(a^n)\| \Le \|a\|^n, \quad n\in \zbb_+.
   \end{align}
Since $a$ is selfadjoint, we see that
   \begin{align*}
   \left[\begin{smallmatrix} e & a^1& \cdots &
a^n
   \\[1ex]
a^1 & a^2& \cdots & a^{n+1}
   \\
\vdots & \vdots & \ddots & \vdots
   \\[1ex]
a^{n} & a^{n+1} & \cdots & a^{2n}
   \end{smallmatrix} \right]
   = \left[\begin{smallmatrix} e & a^1 & \cdots
& a^n
   \\[1ex]
0 & 0 & \cdots & 0
   \\
\vdots & \vdots & \ddots & \vdots
   \\[1ex]
0 & 0 & \cdots & 0
   \end{smallmatrix} \right]^*
   \left[\begin{smallmatrix} e & a^1 & \cdots &
a^n
   \\[1ex]
0 & 0 & \cdots & 0
   \\
\vdots & \vdots & \ddots & \vdots
   \\[1ex]
0 & 0 & \cdots & 0
   \end{smallmatrix} \right]  \Ge 0.
   \end{align*}
By the Stinespring theorem (see
\cite[Theorem~4]{Sti55}), $\varPhi$ is
completely positive, so
$[\varPhi(a^{j+k})]_{j,k=0}^n\Ge 0$. In
particular, we have
   \begin{align} \label{burd}
\sum_{j,k=0}^n \bar \lambda_j \lambda_k
\varPhi(a^{j+k}) \Ge 0, \quad
\{\lambda_j\}_{j=0}^n \subseteq \cbb, \, n \in
\zbb_+.
   \end{align}
Using \eqref{surq2} and \eqref{burd}, we deduce
from \cite[Theorem~2]{Bi94} that there exists a
semispectral measure $F\colon \borel(\rbb) \to
\ogr(\hh)$ such that
   \begin{align} \label{interp}
\langle\varPhi(a^n)h,h \rangle = \int_{\rbb} x^n
\langle F(\D x)h,h\rangle, \quad n \in \zbb_+,
\, h\in \hh.
   \end{align}
(that $F(\rbb)=I$ follows from the assumption
that $\varPhi$ is unital). Since $\lim_{r\to
\infty} \|f\|_r = \|f\|_{\infty}$ whenever
$\|f\|_r < \infty$ for some $r< \infty$ (see
\cite[Exercise~4, p.\ ~71]{Ru87}) and
   \begin{align*}
\lim_{n\to \infty} \Big(\int_{\rbb} x^{2n}
\langle F(\D x)h,h\rangle\Big)^{\frac{1}{2n}}
\overset{\eqref{interp}}= \lim_{n\to \infty}
\langle\varPhi(a^{2n})h, h
\rangle^{\frac{1}{2n}} \overset{\eqref{surq2}}
\Le \|a\|, \quad h \in \hh,
   \end{align*}
we deduce that
   \begin{align*}
\langle F(\{x\in \rbb\colon |x| >
\|a\|\})h,h\rangle=0, \quad h \in \hh.
   \end{align*}
Thus, the closed support of $F$ is contained in
$[-\|a\|, \|a\|]$. Combined with \eqref{form-ua}
and \eqref{interp}, this implies that $x^n \in
L^1(F)$ for all $n\in \zbb_+$ and
   \begin{align*}
\varPhi(a^n) = \int_{\rbb} x^n F(\D x), \quad n
\in \zbb_+.
   \end{align*}
Using \eqref{form-ua} and the well-known fact
that a Hamburger moment sequence having a
representing measure with compact support is
determinate (see \cite{Fug83}), we get the
uniqueness of $F$.

It remains to show that if $a \Ge 0$, then the
closed support of $F$ is contained in $\rbb_+$.
Using the square root theorem (see
\cite[Theorem~2.2.1]{Mur90}), we deduce that
   \begin{align*}
\left[\begin{smallmatrix}
a^1 & a^2 & \cdots &
a^{n+1}
   \\[1ex]
a^2 & a^3& \cdots & a^{n+2}
   \\
\vdots & \vdots & \ddots & \vdots
   \\[1ex]
a^{n+1} & a^{n+2} & \cdots & a^{2n+1}
   \end{smallmatrix} \right] =
\left[\begin{smallmatrix} a^{\frac{1}{2}} &
a^{\frac{3}{2}} & \cdots & a^{\frac{2n+1}{2}}
   \\[1ex]
0 & 0 & \cdots & 0
   \\
\vdots & \vdots & \ddots & \vdots
   \\[1ex] 0 & 0 & \cdots & 0
   \end{smallmatrix} \right]^*
\left[\begin{smallmatrix} a^{\frac{1}{2}} &
a^{\frac{3}{2}} & \cdots & a^{\frac{2n+1}{2}}
   \\[1ex]
0 & 0 & \cdots & 0
   \\
\vdots & \vdots & \ddots & \vdots
   \\[1ex]
0 & 0 & \cdots & 0
   \end{smallmatrix} \right]  \Ge 0.
   \end{align*}
Hence, by \cite[Theorem~4]{Sti55},
$[\varPhi(a^{j+k+1})]_{j,k=0}^n\Ge 0$, which
implies that
   \begin{align} \label{bure}
\sum_{j,k=0}^n \bar \lambda_j \lambda_k
\varPhi(a^{j+k+1}) \Ge 0, \quad
\{\lambda_j\}_{j=0}^n \subseteq \cbb, \, n \in
\zbb_+.
   \end{align}
Combining \eqref{burd}, \eqref{bure} and the
Stieltjes theorem (see
\cite[Theorem~6.2.5]{B-C-R}) with the uniqueness
of $F$, we conclude that the closed support of
$F$ is contained in $\rbb_+$.
   \end{proof}
   \begin{proof}[Second proof of Lemma~\ref{wkuwer}]
As in the first proof of Lemma~\ref{wkuwer},
there is no loss of generality in assuming that
$\mathcal{A}$ is commutative. By the Stinespring
dilation theorem (see \cite[Theorems~1 and
4]{Sti55}), there exist a Hilbert space $\kk$
containing $\hh$ and a $*$-representation
$\pi\colon \mathcal{A}\to\ogr(\kk)$ such that
   \begin{align}  \label{wizu1}
\varPhi(u) = P\pi(u)|_{\hh}, \quad u \in
\mathcal{A},
   \end{align}
where $P\in \ogr(\kk)$ is the orthogonal
projection of $\kk$ onto $\hh$. Applying
\cite[Theorem~12.22]{Rud73}, we deduce that
there exists a spectral measure $E\colon
\borel(\mathfrak M) \to \ogr(\kk)$ such~that
  \begin{align} \label{wizu1.5}
\pi(u) = \int_{\mfr} \widehat{\pi(u)} \, \D E,
\quad u\in \mathcal{A},
  \end{align}
where $\mfr$ is the maximal ideal space of the
unital commutative $C^*$-algebra
$\overline{\pi(\mathcal{A})}$, the (operator
norm) closure of $\pi(\mathcal{A})$ in
$\ogr(\kk)$, and $\widehat{\pi(u)}\colon \mfr
\to \cbb$ is the Gelfand transform of $\pi(u)$.
Set $M =PE|_{\hh}$. It follows from
\eqref{wizu1} and \eqref{wizu1.5} that
   \begin{align} \label{wizu2}
\varPhi(u) = \int_{\mfr} \widehat{\pi(u)} \D M,
\quad u\in \mathcal{A}.
  \end{align}
Define the semispectral measure $F \colon
\borel(\rbb) \to \ogr(\hh)$ by
   \begin{align*}
F(\varDelta) = M
\big(\widehat{\pi(a)}{}^{-1}(\varDelta)\big),
\quad \varDelta \in \borel(\rbb).
   \end{align*}
By \cite[Theorem~11.18]{Rud73} and the
assumption that $a=a^*$, we see that
$\widehat{\pi(a)} \colon \mfr \to \rbb$. Since
$\widehat{\pi(a)}$ is continuous and $\mfr$ is a
compact Hausdorff space, we deduce that
$\widehat{\pi(a)}(\mfr)$ is a compact subset of
$\rbb$ such that $F\big(\rbb \backslash
\widehat{\pi(a)}(\mfr)\big)=0$, which implies
that the semispectral measure $F$ has compact
support. Applying \eqref{form-ua} and the
measure transport theorem (cf.\
\cite[Theorem~1.6.12]{Ash00}), we conclude that
   \begin{align*}
\varPhi(a^n) \overset{\eqref{wizu2}}=
\int_{\mfr} \widehat{\pi(a)}{}^n \D M =
\int_{\rbb} x^n F (\D x), \quad n\in \zbb_+.
  \end{align*}
The proof of the uniqueness of $F$ proceeds as
before.

Finally, if $a\Ge 0$, then by the square root
theorem and \cite[Theorem~11.18]{Rud73}, we
deduce that
   \begin{align*}
\widehat{\pi(a)} =
\widehat{\pi(a^{\frac{1}{2}})}{}^2 \Ge 0,
   \end{align*}
which implies that the closed support of $F$ is
contained in $\rbb_+$.
   \end{proof}
   At this point we are ready to prove the main
result of this section.
   \begin{proof}[Proof of Theorem~\ref{poturs}]
   (i)$\Rightarrow$(ii) Since the map $\varPhi$
preserves selfadjointness, $b:=\varPhi(a)$ does
the job.

   (ii)$\Rightarrow$(i) In view of the
Gelfand-Naimark theorem (see
\cite[Theorem~12.41]{Rud73}), there is no loss
of generality in assuming that
$\mathcal{B}=\ogr(\hh)$. By Lemma~\ref{wkuwer},
there exists a semispectral measure $F\colon
\borel(\rbb) \to \ogr(\hh)$ with compact support
such that
   \begin{align} \label{cgfre}
\varPhi(a^n) = \int_{\rbb} x^n F(\D x), \quad
n\in \zbb_+.
   \end{align}
Therefore, by (ii), we have
   \begin{align*}
b^k=\varPhi(a^k) \overset{\eqref{cgfre}}=
\int_{\rbb} x^k F (\D x), \quad k=p,q.
   \end{align*}
Applying Theorem~\ref{main1} to $T=b$, we
conclude that $F$ is the spectral measure of
$b$. Using the Stone-von Neumann functional
calculus, we get
   \begin{align*}
\varPhi(a^n) \overset{\eqref{cgfre}}=
\Big(\int_{\rbb} x F(\D x)\Big)^n = b^n, \quad n
\in\zbb_+,
   \end{align*}
so $b=\varPhi(a)$, which yields
$\varPhi(a^n)=\varPhi(a)^n$ for all $n\in
\zbb_+$. This implies (i).
   \end{proof}
   \begin{rem} \label{takisubi}
Theorem~\ref{poturs} is somewhat related to a
result of D. Petz (see \cite[Theorem]{Petz86})
which shows that equality holds in Jensen's
inequality $f(\varPhi(a)) \Le \varPhi(f(a))$ if
and only if $\varPhi$ restricted to the unital
subalgebra generated by $a$ is multiplicative,
where $\varPhi$ is a unital positive linear map
between unital $C^*$-algebras, $f$ is a
non-affine operator convex function on an open
subinterval $J$ of $\rbb$ and $a$ is a
selfadjoint element with spectrum in $J$. The
main difference between Petz's result and
Theorem~\ref{poturs} is that the monomial $x^n$
with $n\in \zbb_+$ is a non-affine operator
convex function on $J$ if and only if $n = 2$
(apply rescaling and
\cite[Exercise~V.2.11]{Bha97}, see also
\cite{Riz-Sho79}).
   \hfill $\diamondsuit$
   \end{rem}
   \begin{rem} \label{rown}
We have deduced Theorem~\ref{poturs} from
Theorem~\ref{main1}. It turns out that these two
results are logically equivalent. Indeed, under
the assumptions and notation of
Theorem~\ref{main1}, it suffices to show that
(iii) implies (i). For, define the unital
positive linear map $\varPhi\colon C(K) \to
\ogr(\hh)$ by
   \begin{align*}
\varPhi(f) = \int_{K} f(x) F(\D x), \quad f\in
C(K),
   \end{align*}
where $K$ stands for the closed support of $F$.
Let $a\in C(K)$ be the function defined by
$a(x)=x$ for $x\in K$ and let $b=T$. Using
Theorem~\ref{poturs}, we deduce that $T=\int_{K}
x F(\D x)$ and
   \begin{align*}
\int_{K} x^n F(\D x) = T^n = \int_{K} x^n G(\D
x), \quad n\in \zbb_+,
   \end{align*}
where $G$ is the spectral measure of $T$. Using
\eqref{form-ua} and the well-known fact that a
Hamburger moment sequence having a representing
measure with compact support is determinate, we
conclude that $F=G$, which completes the proof.

A careful inspection of the proof of
Theorem~\ref{poturs} in conjunction with the
above discussion shows that in fact
Problems~\ref{momentprob} and \ref{algebraprob}
are logically equivalent regardless of the
cardinality of the set $\varXi$.
   \hfill $\diamondsuit$
   \end{rem}
In case where the elements $a$ and $b$ are
positive, we get the following version of
Theorem~\ref{poturs}.
   \begin{theorem}
Suppose that $\mathcal{A}$ and $\mathcal{ B}$
are unital $C^*$-algebras, $\varPhi \colon
\mathcal{A} \rightarrow \mathcal{B}$ is a unital
positive linear map, $a\in \mathcal{A}$ is
positive and $p, q$ are distinct positive
integers. Then the following conditions are
equivalent{\em :}
   \begin{enumerate}
   \item[(i)] $\varPhi$ restricted to
the unital subalgebra generated by $\{a\}$ is
multiplicative,
   \item[(ii)] there exists a positive
element $b\in \mathcal{B}$ such that
$b^k=\varPhi(a^k)$ for $k=p,q$.
   \end{enumerate}
Moreover, if {\em (ii)} holds, then
$b=\varPhi(a)$.
   \end{theorem}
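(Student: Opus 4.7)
The plan is to mirror the proof of Theorem~\ref{poturs}, substituting Theorem~\ref{maintw4} for Theorem~\ref{main1}. This substitution is legitimate precisely because $a\Ge 0$ forces the representing semispectral measure to be supported in $\rbb_+$ (via Lemma~\ref{wkuwer}(ii)), which in turn allows us to drop the parity restriction on $(p,q)$ and handle any two distinct positive integers.

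For (i)$\Rightarrow$(ii), I would take $b:=\varPhi(a)$; positivity of $\varPhi$ together with $a\Ge 0$ makes $b$ positive, and the assumed multiplicativity on the unital subalgebra generated by $\{a\}$ yields $b^k=\varPhi(a)^k=\varPhi(a^k)$ for every $k\in\zbb_+$, in particular for $k=p,q$.

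For the converse (ii)$\Rightarrow$(i), by the Gelfand-Naimark theorem I may assume $\mathcal{B}=\ogr(\hh)$. Applying Lemma~\ref{wkuwer} produces the unique compactly supported semispectral measure $F\colon\borel(\rbb)\to\ogr(\hh)$ satisfying $\varPhi(a^n) = \int_{\rbb} x^n F(\D x)$ for all $n\in\zbb_+$, and part (ii) of that lemma places its closed support inside $\rbb_+$. The hypothesis then rewrites as $b^k=\int_{[0,\infty)} x^k F(\D x)$ for $k=p,q$. Since $b$ is positive and $p\neq q$ are positive real numbers, Theorem~\ref{maintw4} applies with $T=b$, $\alpha=p$, $\beta=q$ (condition~(iv)), forcing $F$ to be the spectral measure of $b$. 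The $n=1$ moment identity then gives $\varPhi(a)=\int_{[0,\infty)} x F(\D x)=b$ (establishing the moreover clause), and the Stone-von Neumann functional calculus yields $\varPhi(a^n)=\int_{[0,\infty)} x^n F(\D x)=b^n=\varPhi(a)^n$ for every $n\in\zbb_+$, whence multiplicativity on the unital subalgebra generated by $\{a\}$ follows by linearity.

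No genuinely new obstacle is anticipated beyond what was already handled in the proof of Theorem~\ref{poturs}; the only conceptual change is the use of Lemma~\ref{wkuwer}(ii) to secure $\supp F\subseteq\rbb_+$, which is the key leverage that replaces the parity hypothesis on $(p,q)$ and opens the door to Theorem~\ref{maintw4}.
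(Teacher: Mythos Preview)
Your proposal is correct and coincides with the first of the two proofs the paper offers: it explicitly states that this proof is ``the same as the proof of Theorem~\ref{poturs}, with the only difference that we use Theorem~\ref{maintw4} instead of Theorem~\ref{main1},'' and leaves the details (which you have filled in accurately, including the crucial invocation of Lemma~\ref{wkuwer}(ii) to secure $\supp F\subseteq\rbb_+$) to the reader. The paper also supplies an alternative second proof via Petz's equality criterion for operator-convex Jensen inequality applied to $f(x)=-x^{p/q}$ on $[0,\infty)$, which bypasses semispectral measures entirely.
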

   \begin{proof}
It suffices to show the implication
(ii)$\Rightarrow$(i). We present two proofs. The
first one is the same as the proof of
Theorem~\ref{poturs}, with the only difference
that we use Theorem~\ref{maintw4} instead of
Theorem~\ref{main1}. We leave the details to the
reader.

The second proof relies upon Petz's result.
Without loss of generality, we may assume that
$0 < p < q$. Let $f\colon [0,\infty) \to \rbb$
be the function given by $f(x)=-x^{\frac{p}{q}}$
for $x\in [0,\infty)$. It follows from
\cite[Theorems~V.1.9 and V.2.5]{Bha97} that $f$
is an operator convex function. Using (ii) and
the Stone-von Neumann functional calculus, we
get
   \begin{align*}
f(\varPhi(a^{q})) = -
\varPhi(a^{q})^{\frac{p}{q}} = -
(b^{q})^{\frac{p}{q}} = - b^{p},
   \end{align*}
and
   \begin{align*}
\varPhi(f(a^{q})) = -
\varPhi((a^{q})^{\frac{p}{q}}) = -
\varPhi(a^{p}) = - b^{p}.
   \end{align*}
Consequently,
  \begin{equation*}
f(\varPhi(a^{q}))=\varPhi(f(a^{q})).
  \end{equation*}
Combined with \cite[Theorem]{Petz86} and the
fact that $\varPhi$ is continuous (see
\cite[Corollary~2.9]{Paul86}), this implies that
$\varPhi$ restricted to the unital $C^*$-algebra
generated by $\{a^{q}\}$ is multiplicative.
Applying the Stone-von Neumann functional
calculus and the Weierstrass approximation
theorem, one can show that the unital
$C^*$-algebras generated by $\{a\}$ and
$\{a^{q}\}$ coincide (this is a very special
case of the M\"{u}ntz-Sz\'{a}sz theorem, see
\cite[Theorem~15.26]{Ru87}). Hence, (i) holds.
   \end{proof}
   \section{\label{Sec.5}Proof of Theorem~\ref{main2-w0}}
We begin with a simple observation related to
Problems~\ref{momentprob} and
\ref{dilationprob}. Namely, if $T$ and $F$
satisfy \eqref{momprob}, then by \eqref{form-ua}
and the measure transport theorem for every
$\tau\in \rbb\backslash \{0\}$, $\tau T$ and
$F_\tau$ satisfy \eqref{momprob}, where
$F_\tau\colon \borel(\rbb)\to \ogr(\hh)$ is the
semispectral measure with compact support given
by
   \begin{align*}
F_\tau(\varDelta) = F(\tau^{-1} \varDelta),
\quad \varDelta \in \borel(\rbb).
   \end{align*}
Moreover, $F$ is spectral if and only if
$F_\tau$ is spectral. A similar observation
applies to Problem~\ref{dilationprob}. In other
words, rescaling preserves the affirmative or
negative solutions to Problems~\ref{momentprob}
and \ref{dilationprob}.

Next we prove a lemma that is central to the
proof of Theorem~\ref{main2}.
   \begin{lemma} \label{uklem}
Suppose that $(p,q)\in
\natu^2\backslash\varOmega$ and $p\Le q$, where
$\varOmega$ is as in \eqref{omigra}. Then for
every $\tau \in \rbb \backslash \{0\}$, there
exist $\alpha,\beta \in (0,1)$ and distinct
$\lambda_1, \lambda_2\in \rbb$ such~that
   \begin{equation}\label{uklrow}
\left\{ \begin{array}{ll} \alpha+\beta=1,
   \\[.5ex]
\alpha\lambda_1^p+\beta\lambda_2^p=\tau^p,
   \\[.5ex]
\alpha\lambda_1^q+\beta\lambda_2^q=\tau^q.
   \end{array}
   \right.
   \end{equation}
   \end{lemma}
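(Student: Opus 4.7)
The plan is to reduce to $\tau = 1$ by a rescaling argument and then construct the quadruple $(\alpha, \beta, \lambda_1, \lambda_2)$ case by case according to the parities of $p$ and $q$. Observe that if $(\alpha, \beta, \lambda_1, \lambda_2)$ satisfies \eqref{uklrow} for $\tau = 1$, then $(\alpha, \beta, \tau\lambda_1, \tau\lambda_2)$ satisfies it for any $\tau \in \rbb \backslash \{0\}$, because $(\tau\lambda_i)^k = \tau^k \lambda_i^k$ for $k=p,q$.

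The easy cases admit explicit constructions with $\alpha = \beta = \frac{1}{2}$. When $p$ and $q$ are both even, I would take $\lambda_1 = 1, \lambda_2 = -1$; both moment equations reduce to $\frac{1}{2}+\frac{1}{2}=1$ since $(-1)^p = (-1)^q = 1$. When $p = q$ is odd, the choice $\lambda_1 = 2$ and $\lambda_2 = (2 - 2^p)^{1/p}$ is well defined and real, with $\lambda_2 \Le 0 < 2 = \lambda_1$, and yields $\frac{1}{2}\lambda_1^p + \frac{1}{2}\lambda_2^p = \frac{1}{2}(2^p + 2 - 2^p) = 1$.

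The substantive case is $p < q$ with $q$ odd, so either $p,q$ are both odd or $p$ is even and $q$ is odd. I would use a geometric argument on the parametric curve $C := \{(t^p, t^q)\colon t \in \rbb\} \subset \rbb^2$. Since $(1,1)\in C$, the task is to exhibit distinct $\lambda_1,\lambda_2 \in \rbb$ such that $(1,1)$ is a \emph{strict} convex combination of $(\lambda_1^p,\lambda_1^q)$ and $(\lambda_2^p,\lambda_2^q)$. Fix any slope $m > q/p$ and consider the line $L_m\colon y = 1 + m(x-1)$. On the convex ``positive branch'' $C^+ := \{(x, x^{q/p})\colon x \Ge 0\}$ of $C$, the function $\psi(x) := x^{q/p} - m x + (m - 1)$ vanishes at $x = 1$; since $\psi'(1) = q/p - m < 0$ while $\psi(x) \to \infty$ as $x \to \infty$, the intermediate value theorem delivers a second zero $\lambda_2^p > 1$, whence $\lambda_2 > 1$. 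A parallel intermediate value argument applied to the function describing the intersection of $L_m$ with the ``other branch'' of $C$ yields a second intersection whose $x$-coordinate equals $\lambda_1^p < 1$, with $\lambda_1 < 0$ real (unambiguously defined because $q$ is odd, and because $t \mapsto t^{1/p}$ is defined on all of $\rbb$ when $p$ is odd). With $\lambda_1^p < 1 < \lambda_2^p$, the unique weights $\alpha = (\lambda_2^p - 1)/(\lambda_2^p - \lambda_1^p)$ and $\beta = 1 - \alpha$ lie in $(0,1)$, and since $L_m$ passes through $(1,1)$ as well as through both additional intersections by construction, the $q$-th moment equation in \eqref{uklrow} is automatically satisfied.

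The main technical obstacle is the case analysis for the ``other branch'' of $C$, whose shape depends on the parity of $p$: it is the concave arc $\{(x, -x^{q/p})\colon x \Ge 0\}$ in the right half-plane when $p$ is even, and the concave arc $\{(x, -(-x)^{q/p})\colon x \Le 0\}$ in the third quadrant when $p$ is odd. In each subcase the auxiliary function obtained by substituting the branch parametrization into the equation of $L_m$ must be analyzed separately via the intermediate value theorem and monotonicity in order to produce the required second intersection with $x$-coordinate strictly less than $1$. This is also precisely the step that would fail in the excluded case $p$ odd, $q$ even: the two branches of $C$ would fuse into the single convex graph $\{(x, |x|^{q/p})\colon x \in \rbb\}$, so $(1,1)$ would lie on a strictly convex arc and hence outside the relative interior of every chord of $C$, in agreement with Theorem~\ref{main1}.
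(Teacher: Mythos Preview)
Your proposal is correct. Both you and the paper reduce to $\tau=1$ by rescaling, dispose of the easy cases ($p=q$ and $p,q$ both even) by explicit choices, and use the intermediate value theorem for the remaining cases $p<q$ with $q$ odd. The difference lies in the parametrization: the paper fixes $\lambda_1>1$ first and then solves for $\lambda_2<0$ by showing that the auxiliary functions $\phi(x)=\frac{1-x^p}{1+x^q}$ (for $p$ even) and $\psi(x)=\frac{1+x^p}{1+x^q}$ (for $p$ odd) map onto $(0,1)$, so that the equality $\frac{1-\lambda_1^p}{1-\lambda_1^q}=\frac{1-\lambda_2^p}{1-\lambda_2^q}$ can be achieved; you instead fix the slope $m>q/p$ of a secant line through $(1,1)$ and locate its intersections with the two branches of the curve $C=\{(t^p,t^q):t\in\rbb\}$. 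These are dual descriptions of the same one-parameter family of solutions: the paper's condition on the ratios is precisely the collinearity of $(1,1)$, $(\lambda_1^p,\lambda_1^q)$ and $(\lambda_2^p,\lambda_2^q)$. Your geometric framing has the advantage of making transparent \emph{why} the excluded case ($p$ odd, $q$ even) fails---the curve $C$ then collapses to a single convex graph, so no chord can have $(1,1)$ in its relative interior---whereas the paper's version keeps all computations explicit. Your treatment of the ``other branch'' is admittedly only sketched, but the missing verifications are routine: for $p$ even the relevant function changes sign on $[0,1]$ (values $1-m<0$ and $2>0$), and for $p$ odd the substitution $u=-x$ gives $u^{q/p}-mu-(m-1)$, negative at $u=0$ and tending to $+\infty$.
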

   \begin{proof}
We may assume, without loss of generality, that
$\tau=1$. Using the substitution
   \begin{equation*}
\alpha=\frac{a}{a+b}\quad\text{and}\quad\beta=\frac{b}{a+b}
   \end{equation*}
with $a,b \in (0,\infty)$, we obtain an
equivalent system of equations:
   \begin{equation*}
\left\{ \begin{array}{ll} \frac{a}{a+b}
\lambda_1^p + \frac{b}{a+b} \lambda_2^p = 1,
   \\[.5ex]
\frac{a}{a+b} \lambda_1^q + \frac{b}{a+b}
\lambda_2^q=1.
   \end{array}
   \right.
   \end{equation*}
   Multiplying both sides of the above
equalities by $a+b$ and rearranging gives
   \begin{equation}\label{sysmateq}
\left\{ \begin{array}{ll}
{a}(\lambda_1^p-1)+{b}(\lambda_2^p-1)=0,
   \\[.5ex]
{a}(\lambda_1^q-1)+{b}(\lambda_2^q-1)=0.
   \end{array}
   \right.
   \end{equation}
The determinant of the above system of equations
(with unknowns $a,b$) is
   \begin{equation*}
D(\lambda_1,\lambda_2) =\det \left[
   \begin{matrix}
\lambda_1^p-1 & \lambda_2^p-1
   \\[.5ex]
\lambda_1^q-1 & \lambda_2^q-1
   \end{matrix}
   \right] = (\lambda_1^p-1) (\lambda_2^q-1)-(
\lambda_2^p-1)( \lambda_1^q-1).
   \end{equation*}
Observe that if $D(\lambda_1,\lambda_2) \neq 0$,
then the system of equations \eqref{sysmateq}
has only one solution $a=b=0$. Thus, the only
chance to find nonzero solutions $a, b$ of the
system \eqref{sysmateq} is when
$D(\lambda_1,\lambda_2)=0$. Note that
   \begin{equation}\label{eqd}
\text{if $\lambda_1,\lambda_2\in
\rbb\backslash\{-1,1\}$, then
$D(\lambda_1,\lambda_2)=0$ if and only if
$\frac{\lambda_1^p-1}{\lambda_1^q-1} =
\frac{\lambda_2^p-1}{\lambda_2^q-1}$.}
   \end{equation}

We will consider four cases.

{\sc Case 1.} $p=q$.

It is easily seen that for any $\alpha, \beta\in
(0,1)$ such that $\alpha + \beta =1$, there are
plenty of two-element subsets
$\{\lambda_1,\lambda_2\}$ of $(0,\infty)$
solving \eqref{uklrow}.

{\sc Case 2.} Both $p$ and $q$ are even.

Set $\lambda_1=-1$, $\lambda_2=1$ and
$\beta=1-\alpha$, where $\alpha\in (0,1)$. Then
it is easily seen that \eqref{uklrow} is
satisfied.

{\sc Case 3.} $p<q$, $p$ even and $q$ odd.

Consider the function $\phi \colon
[0,1]\to[0,1]$ defined by
   \begin{equation*}
\phi(x)=\frac{1-x^p}{1+x^q}, \quad x\in[0,1].
   \end{equation*}
Then $\phi$ is continuous, $\phi(0)=1$,
$\phi(1)=0$ and $\phi((0,1)) \subseteq (0,1)$.
By the Darboux property of continuous functions,
$\phi((0,1))=(0,1)$. Take $\lambda_1>1$. Then
   \begin{equation*}
\frac{1-\lambda_1^p}{1-\lambda_1^q}\in (0,1).
   \end{equation*}
Therefore, there exists $x\in (0,1)$ such that
   \begin{equation}\label{dorb}
\frac{1-\lambda_1^p}{1-\lambda_1^q}=\phi(x).
   \end{equation}
Set $\lambda_2=-x$. Then $\lambda_2<0$,
$|\lambda_2|=x<1$ and
   \begin{equation*}
\frac{1-\lambda_1^p}{1-\lambda_1^q}
\overset{\eqref{dorb}}{=} \frac{1-|\lambda_2|^p}
{1+|\lambda_2|^q} =
\frac{1-\lambda_2^p}{1-\lambda_2^q},
   \end{equation*}
which by \eqref{eqd} means that
$D(\lambda_1,\lambda_2)=0$, so the system of
equations \eqref{sysmateq} is linearly
dependent. Take any $a\in (0,\infty)$ and set
   \begin{equation*}
b=a\frac{\lambda_1^p-1}{1-\lambda_2^p}>0.
   \end{equation*}
Then the pair $(a,b)$ is a solution of the
system of equations \eqref{sysmateq}.

{\sc Case 4.} $p<q$ and both $p$ and $q$ are
odd.

Consider the function $\psi\colon
[1,\infty)\to(0,1]$ defined by
   \begin{equation*}
\psi(x)=\frac{1+x^p}{1+x^q},\quad
x\in[1,\infty).
   \end{equation*}
Then $\psi$ is continuous, $\psi(1)=1$,
$\lim_{x\to\infty}\psi(x)=0$ and
$\psi((1,\infty))\subseteq(0,1)$. As a
consequence of the Darboux property of
continuous functions, $\psi((1,\infty))=(0,1)$.
Take $\lambda_1>1$. Observe that
   \begin{equation*}
\frac{1-\lambda_1^p}{1-\lambda_1^q}\in (0,1).
   \end{equation*}
Hence, there exists $x\in(1,\infty)$ such that
   \begin{equation} \label{dorb2}
\frac{1-\lambda_1^p}{1-\lambda_1^q}=\psi(x).
   \end{equation}
Set $\lambda_2:=-x$. Then $\lambda_2<0$,
$|\lambda_2|=x>1$ and
   \begin{equation*}
\frac{1-\lambda_1^p} {1-\lambda_1^q}
\overset{\eqref{dorb2}}{=} \psi(|\lambda_2|) =
\frac{1+|\lambda_2|^p} {1+|\lambda_2|^q} =
\frac{1-\lambda_2^p}{1-\lambda_2^q}.
   \end{equation*}
As in Case~3, taking any $a\in (0,\infty)$ and
setting
   \begin{equation*}
b=a\frac{\lambda_1^p-1}{1-\lambda_2^p} = a
\frac{\lambda_1^p-1} {1+|\lambda_2|^p}>0,
   \end{equation*}
we see that the pair $(a,b)$ is a solution of
the system of equations \eqref{sysmateq}. This
completes the proof.
   \end{proof}
We are now ready to prove the main result of
this section, which provides the
counter-examples mentioned earlier in
Introduction. In fact, this is a stronger
version of Theorem~\ref{main2-w0}.
   \begin{theorem} \label{main2}
Suppose that $(p,q)\in
\natu^2\backslash\varOmega$ and $p\Le q$, where
$\varOmega$ is as in \eqref{omigra}. Let $\tau
\in \rbb\backslash \{0\}$. Set $\hh=\cbb$ and
$T=\tau I$. Then there exist $\alpha,\beta \in
(0,1)$ and $\lambda_1, \lambda_2\in \rbb$ such
that $\alpha+\beta=1$, $\lambda_1 \neq
\lambda_2$ and
   \begin{itemize}
   \item[(i)] the semispectral
measure $F\colon \borel(\rbb) \to \ogr(\hh)$
defined by
   \begin{equation} \label{gdfer}
F(\varDelta) = \alpha\delta_{\lambda_1}
(\varDelta) I + \beta \delta_{\lambda_2}
(\varDelta)I, \quad \varDelta \in \borel(\rbb),
   \end{equation}
is not spectral and
   \begin{equation} \label{tritk}
T^k=\int_{\rbb} x^k F(\D x), \quad k=p,q,
   \end{equation}
   \item[(ii)]
the selfadjoint operator $S\in\ogr(\hh\oplus
\hh)$ defined by
   \begin{equation*}
S = \left[ \begin{matrix} \alpha \lambda_1 +
\beta \lambda_2 & \sqrt{\alpha\beta} (\lambda_1
- \lambda_2)
   \\[1ex]
\sqrt{\alpha\beta}(\lambda_1-\lambda_2) &
\beta\lambda_1+\alpha\lambda_2
   \end{matrix} \right]
   \end{equation*}
does not commute with $P:=\big[
\begin{smallmatrix} 1 & 0 \\0 & 0
\end{smallmatrix}\big]$ and
   \begin{align}\label{dil}
T^k = P S^k|_{\hh}, \quad k=p,q.
   \end{align}
   \item[(iii)] the unital positive  linear map $\varPhi \colon
C(K) \to \ogr(\hh)$ defined by
   \begin{align} \label{konkr}
\varPhi(f) = \int_{K} f \D F, \quad f \in C(K),
   \end{align}
is not multiplicative, $C(K)$ is the unital
algebra generated by $a$ and
   \begin{align} \label{simaduff}
b^k=\varPhi(a^k), \quad k=p,q,
   \end{align}
where $F$ is as in \eqref{gdfer},
$K=\{\lambda_1,\lambda_2\}$, $a(x)=x$ for $x\in
K$ and $b= T$.
   \end{itemize}
   \end{theorem}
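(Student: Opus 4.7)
My plan is to use Lemma~\ref{uklem} as the essential combinatorial input and then read off each of the three assertions by direct, mostly scalar, computation. I would first apply Lemma~\ref{uklem} to the given pair $(p,q) \in \natu^2 \setminus \varOmega$ with $p \Le q$ and to $\tau$, obtaining $\alpha, \beta \in (0,1)$ with $\alpha + \beta = 1$ and distinct $\lambda_1, \lambda_2 \in \rbb$ satisfying \eqref{uklrow}. These four scalars drive the entire construction.

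For part (i), with $F$ as in \eqref{gdfer} and $\hh = \cbb$, formula \eqref{form-ua} gives
\[
\int_{\rbb} x^k F(\D x) = (\alpha \lambda_1^k + \beta \lambda_2^k)\, I, \quad k \in \zbb_+,
\]
so the three equations in \eqref{uklrow} translate directly into $F(\rbb) = I$ and \eqref{tritk} for $T = \tau I$. Since $\alpha \in (0,1)$, the operator $F(\{\lambda_1\}) = \alpha I$ is not an orthogonal projection, so $F$ is not spectral.

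For part (ii), I would diagonalize the real symmetric $2 \times 2$ matrix $S$. A short computation gives $\mathrm{tr}(S) = \lambda_1 + \lambda_2$ and $\det(S) = \lambda_1 \lambda_2$, so the eigenvalues of $S$ are $\lambda_1$ and $\lambda_2$ with orthonormal eigenvectors $e_1 = (\sqrt{\alpha}, \sqrt{\beta})^{\top}$ and $e_2 = (\sqrt{\beta}, -\sqrt{\alpha})^{\top}$. Consequently the $(1,1)$-entry of $S^k = \lambda_1^k e_1 e_1^{\top} + \lambda_2^k e_2 e_2^{\top}$ equals $\alpha \lambda_1^k + \beta \lambda_2^k$, which by \eqref{uklrow} equals $\tau^k$ for $k = p, q$; this proves \eqref{dil}. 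Noncommutativity of $P$ and $S$ is immediate from $PS - SP$ having off-diagonal entries $\pm\sqrt{\alpha \beta}(\lambda_1 - \lambda_2) \neq 0$. Conceptually, $S$ realizes a minimal Naimark-type dilation of $F$ on $\hh \oplus \hh$, which is precisely why the weights $\alpha, \beta$ reappear as the squared first coordinates of $e_1, e_2$.

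For part (iii), since $K = \{\lambda_1, \lambda_2\}$ is a two-point set, $C(K)$ is two-dimensional with basis $\{1, a\}$, so the unital subalgebra generated by $\{a\}$ coincides with $C(K)$. The map $\varPhi$ in \eqref{konkr} reduces to $\varPhi(f) = \alpha f(\lambda_1) + \beta f(\lambda_2)$, and so with $b = T = \tau I$ the identities \eqref{simaduff} are exactly the last two equations of \eqref{uklrow}. Nonmultiplicativity on the generating algebra follows from $\varPhi(a^2) - \varPhi(a)^2 = \alpha \beta (\lambda_1 - \lambda_2)^2 > 0$. The real work of the theorem sits inside Lemma~\ref{uklem}; once those scalars are in hand there is no serious obstacle, only the mildly delicate bookkeeping of choosing the eigenvector weights $\sqrt{\alpha}, \sqrt{\beta}$ so that the $(1,1)$-entry of $S^k$ matches the moments of $F$.
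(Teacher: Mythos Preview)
Your proposal is correct and follows essentially the same approach as the paper: both invoke Lemma~\ref{uklem} for the scalars $\alpha,\beta,\lambda_1,\lambda_2$ and then verify (i)--(iii) by direct scalar/$2\times 2$ computation. The only cosmetic differences are that the paper writes out the Jordan decomposition of $S$ via the explicit unitary $\left[\begin{smallmatrix}\sqrt{\alpha}&-\sqrt{\beta}\\ \sqrt{\beta}&\sqrt{\alpha}\end{smallmatrix}\right]$ (whose columns are exactly your $e_1,e_2$) to compute $S^n$, and in (iii) it witnesses non-multiplicativity with the factored polynomials $u(x)=x-\lambda_1$, $v(x)=x-\lambda_2$ rather than your variance identity $\varPhi(a^2)-\varPhi(a)^2=\alpha\beta(\lambda_1-\lambda_2)^2$.
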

   \begin{proof}
Let $\alpha,\beta,\lambda_1,\lambda_2$ be as in
Lemma~\ref{uklem}.

(i) By \eqref{uklrow} and \eqref{gdfer}, $F$ is
a semispectral measure satisfying \eqref{tritk}.
However, $F$ is not a spectral measure because
$F(\{\lambda_1\})=\alpha\in(0,1)$.

(ii) Clearly, the operator $S$ is selfadjoint.
It follows from the first equality in
\eqref{uklrow} that the matrix
   \begin{equation*}
\left[ \begin{matrix} \sqrt{\alpha} &
\sqrt{\beta}
   \\[1ex]
-\sqrt{\beta} & \sqrt{\alpha}
\end{matrix}
   \right]
   \end{equation*}
is unitary and consequently
   \begin{equation}  \label{jordun}
\left[ \begin{matrix} \sqrt{\alpha} &
\sqrt{\beta}
   \\[1ex]
-\sqrt{\beta} & \sqrt{\alpha}
\end{matrix}
   \right]^{-1} = \left[ \begin{matrix}
   \sqrt{\alpha} & -\sqrt{\beta}
   \\[1ex]
\sqrt{\beta} & \sqrt{\alpha}
\end{matrix} \right].
   \end{equation}
Now it is easily seen that the Jordan
decomposition of $S$ takes the form
   \begin{equation*}
    S =
\left[ \begin{matrix}
\sqrt{\alpha} & -\sqrt{\beta}
   \\[1ex]
\sqrt{\beta} &
\sqrt{\alpha}
\end{matrix} \right]
\left[ \begin{matrix}
\lambda_1 & 0
   \\[1ex]
0 & \lambda_2
\end{matrix} \right]
\left[ \begin{matrix}
\sqrt{\alpha} & \sqrt{\beta}
   \\[1ex]
-\sqrt{\beta} & \sqrt{\alpha}
   \end{matrix}
   \right].
   \end{equation*}
Combined with \eqref{jordun}, this implies that
   \begin{equation}\label{pows}
    S^n =
\left[ \begin{matrix}
\alpha\lambda_1^n+\beta\lambda_2^n &
\sqrt{\alpha\beta} (\lambda_1^n-\lambda_2^n)
   \\[1ex]
\sqrt{\alpha\beta}(\lambda_1^n-\lambda_2^n) &
\beta\lambda_1^n+\alpha\lambda_2^n
\end{matrix}
   \right], \quad n \in \zbb_+.
   \end{equation}
By \eqref{uklrow} and \eqref{pows}, the condition
\eqref{dil} is satisfied. Since $\lambda_1\neq
\lambda_2$, the operator $S$ does not commute with
$P$.

(iii) It is immediate from \eqref{konkr} and (i)
that $\varPhi$ is the unital positive linear map
which satisfies \eqref{simaduff}. Clearly,
$C(K)$ is the unital algebra generated by $a$.
To show that $\varPhi$ is not multiplicative,
consider two polynomials $u(x) = x-\lambda_1$
and $v(x)=x-\lambda_2$ and note that
$\varPhi(u(a)v(a))=0$ while $\varPhi(u(a))
\varPhi(v(a)) = - \alpha\beta (\lambda_1 -
\lambda_2)^2 \neq 0$. This completes the proof.
   \end{proof}
   \begin{rem}
A careful inspection of the proofs of
Lemma~\ref{uklem} and Theorem~\ref{main2}
reveals that there is a lot of freedom in
choosing the parameters $\lambda_1$ and
$\lambda_2$. Note that if $p < q$ and $\tau >
0$, then in view of \cite[Theorem~4.2]{P-S}, at
least one of the parameters $\lambda_1$ or
$\lambda_2$ must be negative. Note also that if
$p<q$, $p$ is odd and $q$ is even, then
according to Theorem~\ref{main1} there are no
$\alpha,\beta, \lambda_1, \lambda_2$ satisfying
the conclusion of Theorem \ref{main2}. However,
in this particular case, we can justify it in an
elementary way. Namely, by the H\"older
inequality, we infer from \eqref{uklrow} that
   \begin{align} \notag
|\tau|^p= |\alpha\lambda_1^p+\beta\lambda_2^p| &
\Le \alpha|\lambda_1|^p+\beta|\lambda_2|^p
      \\  \notag
& \Le \sqrt[r]{\alpha+\beta}\sqrt[q/p]{
\alpha|\lambda_1|^q+\beta|\lambda_2|^q}
   \\  \label{gnu2}
& = \sqrt[r]{\alpha+\beta}\sqrt[q/p]{
\alpha\lambda_1^q+\beta\lambda_2^q}= |\tau|^p,
   \end{align}
where $r\in (1,\infty)$ is such that
$\frac{1}{r}+\frac{p}{q}=1$. This means that
equality in the H\"older inequality holds. As a
consequence, we deduce that
$|\lambda_1|=|\lambda_2|$. Combined with the
first inequality in \eqref{gnu2} and the
assumption that $p$ is odd, this implies that
$\lambda_1=\lambda_2$, which is a contradiction.
   \hfill $\diamondsuit$
   \end{rem}
   \section{\label{Sec.6}More examples}
In this section we illustrate
Theorem~\ref{main2-w0} (cf.\
Theorem~\ref{main2}) by considering two
interesting examples for the case where $p=2$
and $q=3$. Now we use the dilation approach as
stated in Lemma~\ref{fuglemma} (cf.\
Problems~\ref{momentprob} and
\ref{dilationprob}).
   \begin{ex}  \label{prsz1}
Let $\{f_n\}_{n=0}^\infty$ be the Fibonacci
sequence, that is, $f_0=0$, $f_1=1$ and
$f_{n+1}=f_n+f_{n-1}$ for $n\in \natu$. It is
well known and easy to prove that
   \begin{equation} \label{fubonn}
   \left[ \begin{matrix}
0 & 1 \\
1 & 1
\end{matrix} \right]^n=\left[ \begin{matrix}
f_{n-1} & f_{n}\\
f_{n} & f_{n+1}
\end{matrix}
   \right], \quad n \in \mathbb{N}.
   \end{equation}
Set $\hh=\cbb$, $T=I$ and
   \begin{equation*}
   S= \left[ \begin{matrix}
0 & 1 \\
1 & 1
   \end{matrix}
   \right].
   \end{equation*}
Then $S$ is a selfadjoint operator and the
spectral measure $E$ of $S$ is given by
   \begin{align*}
E(\varDelta) = \frac{1}{1+\phi^2}
\delta_{1-\phi} (\varDelta) \left[
\begin{matrix}
\phi^2 & - \phi \\
- \phi & 1
\end{matrix} \right] + \frac{1}{1+\phi^2}
\delta_{\phi} (\varDelta) \left[
\begin{matrix}
1 & \phi \\
\phi & \phi^2
\end{matrix} \right], \quad \varDelta \in
\borel(\rbb),
   \end{align*}
where $\phi=\frac{1+\sqrt{5}}{2}$ is the golden
ratio. It follows from \eqref{fubonn} that
   \begin{align*}
   \begin{minipage}{70ex}
$T^k = P S^k|_{\hh}$ for $k=2,3$ and $T^k \neq P
S^k|_{\hh}$ for $k\in \natu\backslash\{2,3\}$,
   \end{minipage}
   \end{align*}
where $P= \left[ \begin{smallmatrix}
1 & 0 \\
0 & 0
   \end{smallmatrix}
   \right]$ is the orthogonal projection of $\hh
\oplus \hh$ onto $\hh$. The semispectral measure
$F:=PE|_{\hh}$ (see \eqref{fpeh} for the
definition of $PE|_{\hh}$) takes the form
   \begin{equation} \label{cofjus}
F(\varDelta) = \frac{\phi^2}{1+\phi^2}
\delta_{1-\phi} (\varDelta)I +
\frac{1}{1+\phi^2} \delta_{\phi} (\varDelta) I,
\quad \varDelta \in \borel(\rbb).
   \end{equation}
By Lemma~\ref{fuglemma}(ii), we have
   \begin{align*}
   \begin{minipage}{70ex}
$T^k = \int_{\rbb} x^k F(\D x)$ for $k=2,3$ and
$T^k \neq \int_{\rbb} x^k F(\D x)$ for $k\in
\natu\backslash\{2,3\}$.
   \end{minipage}
   \end{align*}
Clearly, by \eqref{cofjus}, $F$ is not a
spectral measure (also because $P$ does not
commute with $S$, see Lemma~\ref{fuglemma}).
   \hfill $\diamondsuit$
   \end{ex}
The next example is a modification of the
previous one.
   \begin{ex} \label{dr-ezam}
Let $T\in \ogr(\hh)$ be a nonzero selfadjoint
operator and let $S\in \ogr(\hh\oplus\hh)$ be
the selfadjoint operator given by the $2 \times
2$ block matrix
   \begin{equation} \label{nocku}
S= \left[ \begin{matrix}
0 & T \\
T & T
\end{matrix} \right].
   \end{equation}
One can verify that
   \begin{align*}
S^2=\left[ \begin{matrix}
T^2 & T^2 \\
T^2 & 2T^2
\end{matrix} \right]
\quad \text{and} \quad S^3=\left[
\begin{matrix}
T^3 & 2T^3 \\
2T^3 & 3T^3
\end{matrix} \right].
   \end{align*}
Thus the operators $T$ and $S$ satisfy the
following two identities:
   \begin{align} \label{hrumhre}
T^k = P S^k|_{\hh}, \quad k=2,3,
   \end{align}
where $P= \left[ \begin{smallmatrix}
I_{\hh} & 0 \\
0 & 0
   \end{smallmatrix}
   \right]$
   is the orthogonal projection of $\hh\oplus
\hh$ onto $\hh$. Let $E$ be the spectral measure
of $S$ and let $F:=PE|_{\hh}$ be the
corresponding semispectral measure. Applying
Lemma~\ref{fuglemma}(ii) and using
\eqref{hrumhre}, we get
   \begin{align*}
T^k = \int_{\rbb} x^k F(\D x), \quad k=2,3.
   \end{align*}
Since $T\neq 0$, the operator $P$ does not
commute with $S$, so by Lemma~\ref{fuglemma},
$F$ is not a spectral measure. In contrast to
Example~\ref{prsz1}, here it is much easier to
use Lemma~\ref{fuglemma} to see that the
semispectral measure $F$ is not spectral.
   \hfill $\diamondsuit$
   \end{ex}
   \begin{rem}
Regarding Example~\ref{dr-ezam}, note that the
operator $S$ given by \eqref{nocku} is unitarily
equivalent to the tensor product
   \begin{align*}
S=\left[ \begin{matrix}
0 & 1 \\
1 & 1
\end{matrix} \right] \otimes T.
   \end{align*}
Combined with \eqref{fubonn}, this implies that
   \begin{align*}
S^n=\left[ \begin{matrix}
0 & 1 \\
1 & 1
\end{matrix} \right]^n \otimes T^n = \left[ \begin{matrix}
f_{n-1} T^n & f_{n} T^n \\
f_{n} T^n & f_{n+1} T^n
\end{matrix}
   \right], \quad n \in \natu.
   \end{align*}
This means that tensoring and orthogonal
summation enrich the class of counterexamples by
allowing semispectral measures to have operator
values on Hilbert spaces of arbitrary dimension.
   \hfill $\diamondsuit$
   \end{rem}
   \section{\label{Sec.7}Semispectral measures with non-compact supports}
In this section we extend the two-moment
characterizations of spectral measures given in
Theorems~\ref{maintw4} and \ref{main1} to the
case of semispectral measures with non-compact
supports. When considering a Borel semispectral
measure $F$ on the real line with non-compact
support, it may happen that the coordinate
function $\rbb \ni x \mapsto x \in \rbb$ is not
in $L^1(F)$ (cf.\ \eqref{form-ua}), which means
that the expression $\int_{\rbb} x^k F(\D x)$,
where $k\in \natu$, may not yield a bounded
operator. On the other hand, if
Problem~\ref{momentprob} has an affirmative
solution under the formally weaker assumption
that the functions $\rbb \ni x \mapsto x^k \in
\rbb$, $k \in \varXi$, are in $L^1(F)$, then $F$
being {\em a posteriori} a spectral measure must
have a compact support. Indeed, by the measure
transport theorem $T^k = \int_{\rbb} x (F\circ
\varphi_k^{-1})(\D x)$, where $\varphi_k$ is as
in \eqref{fiub} and $k$ is any element of
$\varXi$, and thus $F\circ \varphi_k^{-1}$ is
the spectral measure of the (bounded)
selfadjoint operator $T^k$. Hence, $F\circ
\varphi_k^{-1}$ must have compact support (see
\cite[Theorem~5.9]{Sch12}). Consequently, $F$
itself must have compact support. A similar
argument applied in the case where a
semispectral measure $F\colon \ascr \to
\ogr(\hh)$ is considered on an abstract
measurable space $(X,\ascr)$, and the coordinate
function is replaced by a measurable real-valued
function $\omega$ on $X$, leads to the
conclusion that $\omega$ is $F$-essentially
bounded, that is, $F(\{x\in X\colon |\omega(x)|>
r\})=0$ for some $r\in \rbb_+$ (equivalently,
$\omega \in L^{\infty}(F)$). However, to get the
spectrality of $F$, it is not enough to assume
that $\omega$ is $F$-essentially bounded. It
turns out that the ``missing'' property of
$\omega$ is $\sigma$-surjectivity. We say that a
measurable map $f\colon X \to Y$ between
measurable spaces $(X, \ascr)$ and $(Y,\bscr)$
(i.e., a map such that $f^{-1}(\varDelta) \in
\ascr$ for all $\varDelta \in \bscr$) is {\em
$\sigma$-surjective} if the corresponding map
$\bscr \ni \varDelta \longmapsto
f^{-1}(\varDelta) \in \ascr$ is surjective. If
$Y$ is a topological Hausdorff space,
$\sigma$-surjectivity refers to
$\bscr=\borel(Y)$. In case $\ascr=\borel(X)$ and
$\bscr=\borel(Y)$, where $X$ and $Y$ are
topological Hausdorff spaces,
$\sigma$-surjectivity is called in
\cite{C-S-s19} {\em Borel injectivity}. It is
worth emphasizing here that the property of
being $\sigma$-surjective was used in the proof
of Theorem~\ref{main1}.

Applying the measure transport theorem together
with Theorems~\ref{maintw4} and \ref{main1}, we
get the following.
   \begin{theorem} \label{notcump}
Assume that $(X, \ascr)$ is a measurable space,
$F\colon \ascr \to \ogr(\hh)$ is a semispectral
measure, $T \in \ogr(\hh)$ is a selfadjoint
operator and $p,q$ are positive integers such
that $p<q$. Let $\omega\colon X\to \rbb$ be an
$F$-essentially bounded $\sigma$-surjective
function such that
   \begin{align*}
T^k =\int_X \omega(x)^k F(\D x), \quad k=p, q.
   \end{align*}
If $p$ is odd and $q$ is even, or if $\omega(X)
\subseteq \rbb_+$, then $F$ is a spectral
measure.
   \end{theorem}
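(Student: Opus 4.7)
The plan is to reduce to the compactly supported versions Theorems~\ref{main1} and \ref{maintw4} by pushing $F$ forward to a Borel semispectral measure on $\rbb$, and then transfer spectrality back using the $\sigma$-surjectivity of $\omega$.

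First I would define $\tilde F \colon \borel(\rbb) \to \ogr(\hh)$ by $\tilde F(\varDelta) = F(\omega^{-1}(\varDelta))$. The $F$-essential boundedness of $\omega$ yields some $r \in \rbb_+$ with $\tilde F(\rbb \setminus [-r,r]) = 0$, so $\tilde F$ is a Borel semispectral measure with compact support. The measure transport theorem then rewrites the hypotheses as
\[
T^k = \int_{\rbb} x^k\,\tilde F(\D x), \quad k = p, q.
\]

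If $p$ is odd and $q$ is even, Theorem~\ref{main1} applied to $T$ and $\tilde F$ directly yields that $\tilde F$ is the spectral measure of $T$.

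If instead $\omega(X) \subseteq \rbb_+$, then $\tilde F$ is supported in $[0,\infty)$ and $T^p = \int_X \omega(x)^p\,F(\D x)$ is a positive operator. I would then push forward once more: let $\varphi_p(x) = x^p$ and set $\tilde G := \tilde F \circ \varphi_p^{-1}$, a compactly supported semispectral measure on $[0,\infty)$. A short computation using the spectral theorem for $T$ yields the identity $(T^p)^{q/p} = T^q$ in every parity sub-case (when $p$ or $q$ is odd, positivity of $T^p$ or $T^q$ forces $T \Ge 0$ so that $(x^p)^{q/p} = x^q$ on the spectrum of $T$; when $p$ and $q$ are both even, $(x^p)^{q/p} = |x|^q = x^q$ there). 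Combined with measure transport, this gives
\[
(T^p)^r = \int_{[0,\infty)} y^r\,\tilde G(\D y), \quad r = 1, \tfrac{q}{p},
\]
so Theorem~\ref{maintw4} applied to the positive operator $T^p$ with the distinct positive reals $1$ and $q/p$ shows that $\tilde G$ is the spectral measure of $T^p$. Since $\varphi_p$ restricted to $[0,\infty)$ is a Borel isomorphism onto $[0,\infty)$ and $\tilde F$ is concentrated there, the spectrality of $\tilde G$ transfers to $\tilde F$.

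Having $\tilde F$ spectral, I would close by $\sigma$-surjectivity: every $\varDelta \in \ascr$ has the form $\omega^{-1}(\varDelta')$ for some $\varDelta' \in \borel(\rbb)$, whence $F(\varDelta) = \tilde F(\varDelta')$ is an orthogonal projection, so $F$ is a spectral measure. The main obstacle is the identity $(T^p)^{q/p} = T^q$ in the $\omega(X) \subseteq \rbb_+$ case when $T$ is not assumed positive; dealing with it cleanly is exactly the role of the auxiliary pushforward $\tilde G$ and the use of Theorem~\ref{maintw4} in addition to Theorem~\ref{main1}.
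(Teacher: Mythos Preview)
Your proposal is correct and follows the same route the paper sketches in one line: push $F$ forward along $\omega$ to a compactly supported Borel semispectral measure $\tilde F$ on $\rbb$, invoke Theorem~\ref{main1} or Theorem~\ref{maintw4}, and then pull spectrality back via $\sigma$-surjectivity. The only place where you add real content beyond the paper's terse statement is the $\omega(X)\subseteq\rbb_+$ case, where $T$ is only assumed selfadjoint while Theorem~\ref{maintw4} requires a \emph{positive} operator; your extra pushforward by $\varphi_p$ and the verification of $(T^p)^{q/p}=T^q$ handle this correctly. A slightly shorter alternative is to observe directly that if $p$ or $q$ is odd then $T\Ge 0$ (as you note), and if both are even then $T^p=|T|^p$, $T^q=|T|^q$, so one may apply Theorem~\ref{maintw4} straight to $|T|$ with exponents $p,q$ and avoid the second pushforward; but your version is equally valid.
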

It is worth pointing out that there is a wide
class of measurable spaces admitting functions
$\omega$ with the properties mentioned in
Theorem~\ref{notcump}. Namely, if $X$ is a Borel
subset of a complete separable metric space and
$K$ is a bounded Borel subset of $\rbb$ such
that $X$ and $K$ have the same cardinality, then
by \cite[Theorem~2.12]{Par67} there exists a
bijection $\omega_0\colon X \to K$ such that
$\omega_0$ and $\omega_0^{-1}$ are Borel
measurable. This implies that the function
$\omega\colon X \to \mathbb{R}$ defined by
$\omega(x)=\omega_0(x)$ for $x\in X$ is bounded
and $\sigma$-surjective. It turns out that the
notions of injectivity and $\sigma$-surjectivity
coincide for continuous maps $f\colon X \to Y$
between topological Hausdorff spaces whenever
$X$ is $\sigma$-compact (see
\cite[Proposition~16]{C-S-s19}). Coming back to
the case of $X=Y=\rbb$, let us recall the
well-known example of a bounded continuous and
injective (consequently, $\sigma$-surjective)
function $\omega \colon \rbb \to \rbb$ given by
$\omega(x) = \frac{x}{1+|x|}$ for~$x \in \rbb$.
   \bibliographystyle{amsalpha}
   
   \end{document}